\documentclass[11pt,letterpaper]{amsart}

\usepackage{epsfig,overpic,comment}
\usepackage[usenames,dvipsnames,svgnames,table]{xcolor}
\usepackage[hyphens]{url}
\usepackage[pagebackref,linktocpage=true,colorlinks=true,linkcolor=Blue,citecolor=BrickRed,urlcolor=RoyalBlue]{hyperref}
\usepackage[msc-links,abbrev]{amsrefs}
\usepackage{amsmath,amsthm,amssymb}
\usepackage{enumerate}
\usepackage[T1]{fontenc}

\usepackage{lmodern}
\usepackage[utf8]{inputenc}

\usepackage{graphicx}
\usepackage{subcaption}

\usepackage{float}
\usepackage{placeins}

\makeatletter 
\AtBeginDocument{%
	\typeout{Text width = \the\textwidth}%
	\typeout{Text height = \the\textheight}%
	} 
	\makeatother

\newtheorem{theorem}{Theorem}[section]

\theoremstyle{definition}
\newtheorem{note}[theorem]{Note}

\newcommand{\KK}{\mathcal{K}}

\newcommand{\RR}{\mathbb{R}}

\newcommand{\HH}{\mathbb{H}}
\newcommand{\SSS}{\mathbb{S}}

\newcommand{\al}{\alpha}

\newcommand{\p}{\partial}

\newcommand{\HHcal}{\mathcal{H}}
\newcommand{\NNN}{\mathcal{N}}

\newtheorem{prop}{Proposition}[section]
\newtheorem{thm}[prop]{Theorem}
\newtheorem{lem}[prop]{Lemma}

\newtheorem*{claim}{Claim}

\theoremstyle{remark}
\newtheorem{rem}[prop]{Remark}
\newtheorem{ques}[prop]{Question}

\numberwithin{equation}{section}
\allowdisplaybreaks
\begin{document}

	\title[On Santal\'{o}'s Problem]{On Santal\'{o}'s Problem}
	
	\author[F. Hong]{Fang Hong}
	\address{Department of Mathematics and Statistics, McGill University, Montreal, Quebec, H3A 2K6, Canada.}
	\email{\href{mailto:fang.hong@mail.mcgill.ca}{fang.hong@mail.mcgill.ca}}
	
	\begin{abstract}
		This paper investigates an isoperimetric-type problem posed by L. A. Santal\'{o},  concerning convex surfaces in hyperbolic 3-space that minimize total mean curvature among all convex surfaces with fixed surface area. This problem asks for a characterization of the minimizers and for the optimal form of a Minkowski-type inequality in hyperbolic 3-space. In this work, we propose a conjectural description of the minimizers under certain regularity assumptions. We also construct a new family of convex surfaces as as potential minimizer candidates and establish a property of the singular points of any minimizer.
	\end{abstract}
	
	\subjclass[2020]{Primary: 52A40, 53C20; Secondary: 52A38, 53A05.}
	\keywords{Hyperbolic space, Total mean curvature, Minkowski inequality, Convex geometry}
	\thanks{This research was supported by the Dr. and Mrs. Milton Leong Fellowships in Science and an ISM Graduate Scholarship.}
	\maketitle	
	\section{Introduction}
	
	This paper concerns an isoperimetric-type problem raised by L. A. Santal\'{o}. In 1963, Santal\'{o} conjectured \cite{santalo1963} (see also  \cite[p. 78]{santalo2009}) that, for any compact convex domain $\Omega$ in hyperbolic space $\HH^3(a)$ with constant curvature $a \leq 0$ and with $\Gamma=\partial \Omega$,
	\begin{equation}\label{eq:santalo-conj1}
		M(\Gamma) \geq \sqrt{16\pi S(\Gamma)-4a S(\Gamma)^2},
	\end{equation}
	where $S(\Gamma)$ denotes the surface area of $\Gamma$, $M(\Gamma) := \int_\Gamma Hd\mu$ is the \emph{total mean curvature} of $\Gamma$, and the \emph{mean curvature} of $\Gamma$ is the trace of the second fundamental form $H:=\textup{trace}(\mathrm{I\!I}_\Gamma)$.
	The lower bound of \eqref{eq:santalo-conj1} would then correspond to the total mean curvature of a sphere with the same area as $\Gamma$. That is, if \eqref{eq:santalo-conj1} holds, then geodesic balls minimize total mean curvature among convex surfaces with fixed area. 
	
	By scaling the metric, we may take $a=-1$ without loss of generality, and thus the ambient space is the standard hyperbolic $3$-space $\HH^3$. The conjecture was disproved by Naveira-Solanes \cite[p. 815]{santalo2009} (see also \cite[p. 109]{natario2015} or \cite[Note 1.3]{ghomi-spruck2023}). Naveira-Solanes constructed examples indicating that \eqref{eq:santalo-conj1} is false in general. They showed that a flat double disk, which is isometric to a geodesic sphere in a totally geodesic plane isometric to $\HH^2$, embedded in $\HH^3$, with its two faces counted in the surface area and its edge counted in the singular total mean curvature, gives a counterexample to \eqref{eq:santalo-conj1} when the surface area $S(\Gamma)$ is large enough. In fact, if we denote $M_1(S)$ to be the total mean curvature of the geodesic sphere in $\HH^3$ with surface area $S$, and $M_2(S)$ to be the total mean curvature of the flat double disk in $\HH^3$ with surface area $S$, then there exists a constant $\hat{S} = \frac{8\pi(\pi^2-8)}{16-\pi^2} \approx 7.665 $ such that $M_1(S) > M_2(S)$ whenever $S>\hat{S}$. We will explain that this family of convex bodies provides a counterexample to \eqref{eq:santalo-conj1} in Section \ref{sec:1}. On the positive side, Ge-Wang-Wu \cite[Theorem 6.1]{ge-wang-wu2014}, see also \cite{cheng2012}, proved that among horo-convex surfaces with fixed surface area, the minimizer of total mean curvature is the geodesic sphere. That is, \eqref{eq:santalo-conj1} holds for all compact horo-convex surfaces $\Gamma$ in $\HH^3$. 	
	
	In hyperbolic spaces, the total mean curvature of a general convex surface (without any regularity assumption) is well-defined by approximation of outer parallel surfaces, see \cite[Section 3]{ghomi-spruck2023}. By Blaschke selection theorem, a minimizer of total mean curvature among convex domains in hyperbolic space $\HH^3$ with a given surface area exists. One would like to characterize the geometry of the minimizers, as well as the optimal form of Minkowski-type inequality. We call this \textbf{Santal\'{o}'s problem}. This is a generalization of the isoperimetric problem in $\HH^3$ related to the total mean curvature and surface area. 
	
	Minimization of total mean curvature of convex hypersurfaces has long been studied. In 1901, Minkowski \cite{minkowski1903} proved that the following inequality holds for any non-empty bounded
	$C^2$ convex surface $\Gamma \subset \RR^3$, 
	\begin{equation}
		\label{eq:minkowski0}
		M(\Gamma)\geq \sqrt{16\pi S(\Gamma)},
	\end{equation}
	and equality holds only when $\Gamma$ is a sphere in $\RR^3$.
	
	The original proof of Minkowski is based on the isoperimetric inequality together with Steiner-Minkowski formulas. Hence, inequality \eqref{eq:minkowski0} remains true if $\Gamma$ is only a $C^{1,1}$ surface (or equivalently, if $\Gamma$ has positive reach). If we do not assume any regularity, the same inequality holds if we use the mean width of the convex body enclosed by the surface as an alternative definition for total mean curvature. 
	Guan-Li \cite[Theorem 2]{guan-li2009} proved that \eqref{eq:minkowski0} holds provided that $\Gamma$ is star-shaped and mean-convex. G. Huisken \cite[Theorem 6]{guan-li2009} (see also \cite{huisken2009}) showed that \eqref{eq:minkowski0} holds for outward-minimizing surfaces. Dalphin-Henrot-Masnou-Takahashi \cite[Theorem 1.1]{dalphin2016}
	proved \eqref{eq:minkowski0} in the case where $\Gamma$ is axially symmetric and such that $\Gamma \cap P$ is connected for every affine plane $P$ orthogonal to the axis of symmetry.
	
	Inequality \eqref{eq:minkowski0} is actually a consequence of a generalization due to Minkowski of the isoperimetric inequality. This generalization uses the notion of mixed volumes and quermassintegrals of convex bodies, and total mean curvature is in fact the first quermassintegral in Euclidean spaces. We refer to \cite{schneider2014}, \cite{solanes2006}, \cite{santalo1976book} for details, and \cite{wang-xia2014} for quermassintegrals in hyperbolic spaces.
	Generalizations of \eqref{eq:minkowski0} to hyperbolic spaces have been a long-standing problem \cite{santalo1963},  \cites{gallego-solanes,natario2015}. Santal\'{o}'s problem is one such problem. In recent years, using of curvature flows has led to progress, see \cites{wang-xia2014,ge-wang-wu2014,andrews-hu-li-2020,scheuer2020,Brendle-Guan-Li,ghomi-spruck2023}. 
	
	Brendle-Guan-Li \cite{Brendle-Guan-Li} proved that for any compact convex body $\Omega$ in $\HH^3$ with $\Gamma = \p \Omega$,
	\begin{equation}\label{eq:minkowski3-W1}
		W_1(\Gamma) \geq \sqrt{S(\Gamma)}\sqrt{S(\Gamma)+4\pi} + 4\pi \operatorname{arcsinh}\left( \sqrt{\frac{S(\Gamma)}{4\pi}} \right)
		,
	\end{equation}
	where $W_1(\Gamma) = M(\Gamma) - 2 \operatorname{Vol}(\Omega)$ is the first quermassintegral of the convex body $\Omega$ in $\HH^3$ enclosed by $\Gamma$, and \emph{mean convex} means the mean curvature $H$ is non-negative on $\Gamma$. Equality holds only if $\Gamma$ is a geodesic sphere in $\HH^3$. Although \eqref{eq:minkowski3-W1} is sharp in $\HH^3$, the volume of the enclosed domain appears in the inequality. Ghomi-Spruck \cite{ghomi-spruck2023} proved that for any compact convex surface $\Gamma$ in $\HH^3(a)$ with constant curvature $a\leq 0$,
	\begin{equation}\label{eq:minkowski2}
		M(\Gamma)\geq \sqrt{16\pi S(\Gamma)- 2a S(\Gamma)^2},
	\end{equation}
	where equality holds only if $a=0$ and the domain bounded by $\Gamma$ is isometric to a ball in $\RR^3$. Hence \eqref{eq:minkowski2} is not sharp when the ambient space is fixed to be $\HH^3$. 
	Recently the author \cite{hong2026} strengthened \eqref{eq:minkowski2} to a sharp inequality in $\HH^3$. For any compact convex body $\Omega$ in $\HH^3$ with $\Gamma = \p \Omega$,
	\begin{equation}\label{eq:minkowski-hong}
		M(\Gamma) \geq \sqrt{16\pi S(\Gamma) + 2 S(\Gamma)^2 + 2S_0(\operatorname{Vol}(\Omega))^2},
	\end{equation}
	where $S_0$ denotes the {isoperimetric profile function}, that is, $S_0(x)$ is the surface area of the (geodesic) sphere in $\HH^3$ with volume $x$. Equality holds only if $\Gamma$ is a geodesic sphere in $\HH^3$. \eqref{eq:minkowski-hong} appears to be the sharpest Minkowski-type inequality in hyperbolic 3-space $\HH^3$ currently known.
	
	In $\mathbb H^3$,  Blaschke selection theorem guarantees the existence of a minimizer of total mean curvature among convex domains with fixed surface area (we include a brief proof in Section \ref{sec:1}). Yet, the shape of the general convex minimizer is not known. Therefore, \emph{Santal\'{o}'s problem}, on finding the optimal convex surface with the minimum total mean curvature $M$ among convex surfaces with fixed surface area $S$, is still open. We believe a description of the minimizer and the related geometric inequality can deepen our understanding of hyperbolic geometry, and have applications in mathematical physics and general relativity. For example, the total mean curvature in hyperbolic space is also used in the definition of Wang-Yau's quasi-local mass in \cite[Theorem 1.3]{wang-yau2007}.
	
	In the search for the minimizer to Santal\'{o}'s problem, the following was observed by Guan \cite{guan2024}, 
	\begin{prop}\label{thm:santalo-conj2}
		If $\Gamma$ is a convex surface in $\HH^3$ that attains the minimum total mean curvature among surfaces with fixed area, that is, there exists a constant $S>0$ such that
		\begin{align*}
			M(\Gamma) = \min\left\{ M(\Sigma): \Sigma \text{ is a convex surface  in } \HH^3, S(\Sigma) = S\right\},
		\end{align*}
		then
		\begin{enumerate}
			\item If $\Gamma$ is degenerate, that is, the convex hull of $\Gamma$ has no interior point, then $\Gamma$ is a flat double disk.
			\item If $\Gamma$ is $C^2$ and strictly convex, then $\Gamma$ is a geodesic sphere.
		\end{enumerate}
	\end{prop}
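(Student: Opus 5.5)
For part (1), we use that a degenerate convex body in $\HH^3$ lies in a totally geodesic plane $P\cong\HH^2$. Its boundary surface $\Gamma$ is then a ``double'' copy of a compact convex domain $D\subset P$. For such a double domain, the surface area is $S(\Gamma)=2A(D)$. The total mean curvature, computed as the limit of outer parallel surfaces, concentrates on the edge. At an edge point the dihedral angle is $\pi$, so the limit contributes $\pi\cdot L(\partial D)$, giving $M(\Gamma)=\pi L(\partial D)$. (One can check this normalization against the flat double disk computation of Section~\ref{sec:1}.) Minimizing $M$ at fixed $S$ among degenerate competitors therefore means minimizing $L(\partial D)$ at fixed $A(D)$ in $\HH^2$. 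By the hyperbolic isoperimetric inequality $L^2\ge 4\pi A+A^2$, with equality only for geodesic disks, $D$ must be a geodesic disk, i.e.\ $\Gamma$ is a flat double disk. Strictly speaking, $\Gamma$ minimizes over all convex surfaces, hence in particular over degenerate ones, so the comparison with the double disk of the same area forces equality in the isoperimetric inequality.

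For part (2), we use a first-variation (Lagrange multiplier) argument. Since $\Gamma$ is $C^2$ and strictly convex, every sufficiently small smooth normal perturbation $\Gamma_t=\{x+t\varphi(x)\nu(x)\}$, $\varphi\in C^\infty(\Gamma)$, stays convex. This is the key point where strict convexity is needed: it makes the constraint set locally a full neighbourhood in the space of normal graphs, so one-sided constraints do not appear. We use the standard variation formulas in $\HH^3$:
\begin{align*}
\frac{d}{dt}\Big|_{t=0} S(\Gamma_t)=\int_\Gamma H\varphi\,d\mu,\qquad
\frac{d}{dt}\Big|_{t=0} M(\Gamma_t)=\int_\Gamma \bigl(H^2-|\mathrm{I\!I}|^2-2\bigr)\varphi\,d\mu=\int_\Gamma 2(K_e-1)\varphi\,d\mu ,
\end{align*}
where $K_e=\kappa_1\kappa_2$ is the extrinsic curvature. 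The second formula comes from $\partial_t H=-\Delta\varphi-(|\mathrm{I\!I}|^2+\mathrm{Ric}(\nu,\nu))\varphi$ with $\mathrm{Ric}(\nu,\nu)=-2$, after integrating $\Delta\varphi$ away.

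At a constrained minimum there is a $\lambda$ with $2(K_e-1)=\lambda H$ on $\Gamma$. For $C^2$ regularity the variation formulas need some care, but they can be justified by using $C^2$ normal graphs, or by approximating $\varphi$ by smooth functions.

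The main remaining step, and the expected obstacle, is to conclude from the Weingarten relation $\kappa_1\kappa_2-1=\tfrac{\lambda}{2}(\kappa_1+\kappa_2)$ that $\Gamma$ is a geodesic sphere. The plan is to invoke the Alexandrov-type theorem for closed embedded surfaces satisfying an elliptic Weingarten relation in $\HH^3$ (Alexandrov's reflection method). Write $\kappa_1=F(\kappa_2)$, solving the relation for $\kappa_1$. Ellipticity requires $\partial F/\partial\kappa_2<0$, i.e.\ $-(\kappa_1-\lambda/2)/(\kappa_2-\lambda/2)<0$. To verify this, we first show that $\lambda<0$ fails or gives a sign we can use, and that $\kappa_i-\lambda/2$ have the same sign. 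The relation can be rewritten as
\[
(\kappa_1-\tfrac{\lambda}{2})(\kappa_2-\tfrac{\lambda}{2})=1+\tfrac{\lambda^2}{4}>0,
\]
so the two factors share a sign, which gives exactly ellipticity. Alexandrov's theorem in $\HH^3$ then yields that $\Gamma$ is a geodesic sphere. Alternatively, one could integrate the relation against Minkowski formulas in $\HH^3$ to force umbilicity, but the reflection argument seems cleaner.
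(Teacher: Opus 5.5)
Your part (1) is the paper's argument. The gap is in part (2), and it comes from a sign error in your first variation of $M$. In $\HH^3$ one has $\operatorname{Ric}(\nu,\nu)=-2$, so the term $-(|\mathrm{I\!I}|^2+\operatorname{Ric}(\nu,\nu))\varphi$ equals $(2-|\mathrm{I\!I}|^2)\varphi$, and
\[
\frac{d}{dt}\Big|_{t=0}M(\Gamma_t)=\int_\Gamma\bigl(H^2-|\mathrm{I\!I}|^2+2\bigr)\varphi\,d\mu=\int_\Gamma 2(K_e+1)\varphi\,d\mu .
\]
You can check this on geodesic spheres: $M=4\pi\sinh(2r)$ gives $dM/dr=8\pi\cosh(2r)=\int 2(K_e+1)\,d\mu$, whereas $\int 2(K_e-1)\,d\mu=8\pi$. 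Your version is the formula in $\SSS^3$. So the Euler--Lagrange equation is $2(K_e+1)=\lambda H$. Setting $\mu=\lambda/2>0$, this reads
\[
(\kappa_1-\mu)(\kappa_2-\mu)=\mu^2-1,
\]
not $1+\mu^2$.

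The right-hand side is no longer automatically positive. For $\mu<1$ the relation is hyperbolic, and for $\mu=1$ it is degenerate. So Alexandrov's theorem cannot be invoked until you prove $\mu>1$. That is exactly the substantive step your sign error hid, and it is where the paper uses minimality a second time. Integrating $K_e+1=\mu H$ and using Gauss--Bonnet, $\int_\Gamma K_e\,d\mu=4\pi+S$, gives $4\pi+2S=\mu M$. Comparing with the geodesic sphere of the same area, $M\le M_1(S)=\sqrt{16\pi S+4S^2}$, then gives $\mu\ge(4\pi+2S)/\sqrt{16\pi S+4S^2}>1$. A farthest-point argument also works: at a point of $\Gamma$ at maximal distance $R$ from a fixed point, $\kappa_i\ge\coth R>1$. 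This contradicts $(\kappa_1-\mu)(\kappa_2-\mu)\le 0$ when $\mu\le 1$.

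Once $\mu>1$ is in hand, your route closes. The two factors have a fixed common sign on the connected surface, so the relation is elliptic, and Alexandrov's reflection theorem for closed embedded elliptic Weingarten surfaces in $\HH^3$ gives a geodesic sphere. The paper avoids that theorem. It excludes the branch where $\kappa-\mu I$ is negative definite by an integral inequality. It then applies AM--GM to $\kappa-\mu I>0$ and integrates to get $M\ge 2S(\mu+\sqrt{\mu^2-1})$. Together with $\mu M=4\pi+2S$ and $M\le M_1(S)$, this forces $M=M_1(S)$ and umbilicity. The paper's argument is elementary and self-contained, while yours imports a heavier (though standard) rigidity theorem. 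Both hinge on first establishing $\mu>1$.
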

	
	Therefore, if one can prove that every minimizer of Santal\'{o}'s problem falls into these two cases, then the minimizer is determined to be either the geodesic sphere or the flat double disk. As a result, Proposition \ref{thm:santalo-conj2} led Guan to speculate the optimal inequality for Santal\'{o}'s problem to be
	\begin{align}\label{eq:santalo-conj2}
		M(\Gamma) \geq 
		&\min\left\{ M_1(S(\Gamma)), M_2(S(\Gamma)) \right\}
		\\
		=&
		\nonumber
		\min\left\{\sqrt{16\pi S(\Gamma) + 4 S(\Gamma)^2},
		\sqrt{2\pi^3 S(\Gamma) + \frac{\pi^2}{4} S(\Gamma)^2}
		\right\},
	\end{align}
	
	However, it turns out that Santal\'{o}'s problem is more complicated. We construct a family of convex bodies in $\HH^3$ which shows \eqref{eq:santalo-conj2} does not hold in general. 
	\begin{thm}\label{thm:counterexample}
		There exist positive constants $S_1, S_2 $ with $S_1< S_2$ such that for any $S$ with $S_1< S < S_2$, there exists a convex surface $\Gamma^{S}$ such that $\Gamma^{S} $ has surface area $S$ and 
		\begin{align*}
			M(\Gamma^S) <
			&\min\left\{ M_1(S), M_2(S) \right\}
			\\
			=&
			\nonumber
			\min\left\{\sqrt{16\pi S + 4 S^2},
			\sqrt{2\pi^3 S + \frac{\pi^2}{4} S^2}
			\right\}.
		\end{align*}
	\end{thm}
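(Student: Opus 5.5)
The plan is to interpolate between the two competitors in \eqref{eq:santalo-conj2} with a one-parameter family of convex bodies, and to show that near the crossover area the interpolating bodies beat both endpoints. For $R>0$ and $r\ge 0$, let $D_R$ be a geodesic disk of radius $R$ in a totally geodesic plane $P\cong\HH^2\subset\HH^3$. Let $\Omega_{R,r}$ be its outer parallel body at distance $r$, and let $\Gamma_{R,r}=\partial\Omega_{R,r}$. For $r=0$ this is the flat double disk, and for $R=0$ it is the geodesic sphere of radius $r$. The boundary $\Gamma_{R,r}$ is $C^{1,1}$ for $r>0$ and splits into three pieces:
\begin{enumerate}
\item two equidistant caps over $D_R$, each with area $A(R)\cosh^2 r$, where $A(R)=2\pi(\cosh R-1)$, and with principal curvatures $\tanh r$, hence mean curvature $2\tanh r$;
\item a half-tube of radius $r$ around the circle $\partial D_R$, of length $L(R)=2\pi\sinh R$.
\end{enumerate}

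First I will compute $S(R,r)$ and $M(R,r)$ in closed form. The caps are immediate from the data above. For the half-tube, I will parametrize it by the arclength $s$ of $\partial D_R$ and the normal angle $\theta\in[-\pi/2,\pi/2]$, and use Fermi coordinates around the circle. The area element and mean curvature come out as explicit expressions in $\cosh r$, $\sinh r$, $\cosh R$, $\sinh R$ and $\cos\theta$. Alternatively, the hyperbolic Steiner formula for parallel bodies gives the same result. Both $S$ and $M$ are smooth in $(R,r)$, and they recover $M_1(S)$ when $R=0$ and $M_2(S)$ when $r=0$.

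Next I fix $S$ and use the area constraint to view $M$ as a function along the level curve $\{S(R,r)=S\}$. The value $\hat S$ where $M_1(\hat S)=M_2(\hat S)$ is exactly where both endpoints of this curve have equal $M$. It therefore suffices to show that, at the double-disk endpoint $r=0$, $R=R_0$ with $2A(R_0)=\hat S$, moving into $r>0$ along the level curve strictly decreases $M$. To first order in $r$,
\[
S\approx 2A(R)+\pi L(R)\,r,\qquad M\approx \pi L(R)+\bigl(4A(R)+c(R)\bigr)\,r .
\]
Here $c(R)$ is the first-order contribution of the half-tube, coming from the geodesic curvature $\coth R$ of $\partial D_R$ in $P$ and from the tube's own curvature. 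Imposing $dS=0$ gives $dR=-\pi L\,dr/(2A')$, and hence
\[
\frac{dM}{dr}\Big|_{S}=4A+c-\frac{\pi^2 L L'}{2A'} .
\]
The goal is to show that this quantity is negative at $R=R_0$. If first-order terms vanish or the sign is wrong there, I will instead start from the sphere endpoint and expand in $R$ with $S$ fixed.

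Once a strict inequality $M(\Gamma_{R,r})<M_1(\hat S)=M_2(\hat S)$ holds for some member with $S(\Gamma_{R,r})=\hat S$, continuity finishes the proof. The maps $S$, $M$, $M_1$ and $M_2$ are all continuous, and by the implicit function theorem, perturbing $R$ or $r$ slightly produces surfaces of every area in a small interval $(S_1,S_2)\ni\hat S$ on which the strict inequality persists. This gives $\Gamma^S$ as required in Theorem~\ref{thm:counterexample}.

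The main obstacle is the sign computation at $R_0$. Here $\cosh R_0=1+\hat S/(4\pi)$ with $\hat S=\frac{8\pi(\pi^2-8)}{16-\pi^2}$, and the tube term $c(R)$ must be computed exactly. I expect to need an explicit numerical evaluation with rigorous bounds, or a second-order expansion if the first-order term is too delicate. If the small-$r$ analysis fails, the fallback is to evaluate $M$ directly along the level curve at one explicit interior point $(R,r)$ with rigorous numerical bounds.
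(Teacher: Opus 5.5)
\textbf{The approach has a genuine gap: your family $\Gamma_{R,r}$ can never beat $\min\{M_1,M_2\}$.} So the first-order test at $R_0$, the expansion from the sphere end, and the numerical fallback must all fail.

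Each $\Gamma_{R,r}$ is the outer parallel surface at distance $r$ of the flat double disk of area $S_0=4\pi(\cosh R-1)$. By Proposition~\ref{parallel.surface.property}, the quantity $M^2-16\pi S-4S^2=M^2-M_1(S)^2$ is invariant under outer parallel flow. You can also check this directly from the Steiner formula, which gives $S=2\pi[(2\cosh R-1)\cosh 2r-1]+\pi^2\sinh R\sinh 2r$ and $M=\partial_r S$. Hence, with $S=S(\Gamma_{R,r})\ge S_0$,
\[
M(\Gamma_{R,r})^2-M_1(S)^2=f(S_0),\qquad f(x):=M_2(x)^2-M_1(x)^2=2\pi(\pi^2-8)\,x-\Bigl(4-\frac{\pi^2}{4}\Bigr)x^2 .
\]
The parabola $f$ vanishes at $0$ and at $\hat S$, and it is decreasing on $[\hat S/2,\infty)$. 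If $S_0\le\hat S$, then $f(S_0)\ge 0$, so $M\ge M_1(S)$. If $S_0>\hat S$, then $M^2-M_2(S)^2=f(S_0)-f(S)\ge 0$. Either way, $M(\Gamma_{R,r})\ge\min\{M_1(S),M_2(S)\}$ for every $(R,r)$.

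Your own first-order computation shows the same thing. The tube term is $c(R)=8\pi\cosh R$, so
\[
\frac{dM}{dr}\Big|_{S}=\pi\bigl[(16-\pi^2)\cosh R-8\bigr].
\]
At $\cosh R_0=\pi^2/(16-\pi^2)$ this equals $\pi(\pi^2-8)>0$, which is the wrong sign. Likewise, on the level set $S=\hat S$ every interior member has $0<S_0<\hat S$, hence $M>M_1(\hat S)$ strictly.

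\textbf{What is missing is a competitor not generated by parallel flow from either endpoint.} You need a genuinely sharp edge, with exterior dihedral angle strictly between $0$ and $\pi$, attached to a curved part. The paper uses the ``drum'' $\Gamma(r,\alpha)$: a geodesic sphere of radius $r$ truncated by two totally geodesic planes placed symmetrically about its center. Its area and total mean curvature are explicit, and the two edges contribute $4\pi\sinh r_1\,(\frac{\pi}{2}-\beta)$ via the hyperbolic Steiner formula. At $(r_0,\alpha_0)=(0.734735,0.759454)$ one gets $S\approx 7.6625$ and $M\approx 24.8923<\min\{M_1(S),M_2(S)\}\approx 24.9001$.

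Your closing continuity argument is fine and matches how the paper finishes. The strict inequality persists on a neighborhood of $(r_0,\alpha_0)$, and $S$ is strictly increasing in $r$, so an interval of areas is covered. But that argument has to be applied to a family like the drum, not to parallel bodies of the disk.
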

	We will show that 
	$\Gamma^{S} = \Gamma(r_S,\alpha_S)$ arises from a two-variable family of convex surfaces, and can be described as a geodesic sphere cut by two totally geodesic planes in $\HH^3$, forming a "drum" shape.
	The existence of such surfaces suggests that the optimal convex surface is possibly non-smooth.
	
	In the study of convex bodies without any regularity assumption on their boundaries, the theory of singular points and curvature measures provides useful tools. General convex surfaces may have singular points, where there are multiple unit outer normal vectors (or supporting planes). If the ambient space is $3$-dimensional, there are two types of singular points: $1$-singular points, with a $2$-dimensional normal cone, behaving like edges of a cube; $0$-singular points, with a $3$-dimensional normal cone, behaving like vertices of a cube. We will give a brief review for singular points and normal cones in Section \ref{sec:1}, Subsection \ref{subsec.prelim.4}.
	Regarding the geometry of singular points of minimizers of Santal\'{o}'s problem, we will prove the following theorem:
	\begin{thm}\label{thm:santalo-sing}
		If $\Gamma$ is a convex surface in $\HH^3$ that attains the minimal total mean curvature with fixed surface area, then $\Gamma$ has no $0$-singular point. That is, for any point $p$ on $\Gamma$, the normal cone $N(p,\operatorname{conv(\Gamma)})$ of the convex body $\operatorname{conv}(\Gamma)$ at $p$ is $1$-dimensional or $2$-dimensional; $p$ is either a $1$-singular point or a regular point.
	\end{thm}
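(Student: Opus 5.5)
We argue by contradiction: suppose a minimizer $\Gamma$ has a $0$-singular point $p$, i.e. the normal cone $N(p,\operatorname{conv}(\Gamma))$ is $3$-dimensional. We build a competitor by \emph{truncating the vertex}. For small $t>0$, choose a unit vector $u$ in the interior of the normal cone at $p$. Cut $\Omega=\operatorname{conv}(\Gamma)$ by the totally geodesic plane $P_t$ orthogonal to the geodesic from $p$ in direction $-u$, at distance $t$ from $p$. This gives $\Omega_t=\Omega\cap P_t^-$. Since $u$ is interior to a $3$-dimensional normal cone, for small $t$ the removed cap $\Omega\setminus\Omega_t$ lies in a small neighborhood of $p$ and looks, to leading order, like a cone over a small convex polygonal-type region $D_t=\Omega\cap P_t$ of diameter $O(t)$. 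Then $\Gamma_t=\partial\Omega_t$ is convex. We then rescale $\Omega_t$ back to area $S$, in a hyperbolic-appropriate way: replace $\Omega_t$ by its outer parallel body at distance $\varepsilon_t$.

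\textbf{Step 1 (first-order change of area and total mean curvature).} Use the curvature-measure (Steiner-type) description of $M$ for general convex bodies in $\HH^3$ recalled in Section \ref{sec:1}. The contribution of the vertex region to $M$ comes from edges: $\int \text{length}\times(\text{dihedral exterior angle})$, plus smooth parts. Truncation removes the lateral cap, of area $A_{\mathrm{cap}}\sim c_1 t^2$, and adds the face $D_t$, of area $\sim c_2t^2$ with $c_2<c_1$ (a face is shorter than a tent over it). The loss of area is therefore $\Delta S \approx (c_1-c_2)t^2>0$, of order $t^2$. For $M$, the new edge $\partial D_t$ has length $O(t)$ and exterior dihedral angles bounded by $\pi$. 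The removed edges and mean-curvature mass in the cap are also $O(t)$, and the Euclidean tangent-cone computation shows $\Delta M = O(t)$ in general. This is too crude, so the key comparison must be made at the level of the tangent cone $C_p$. Blow up by $1/t$: the hyperbolic metric converges to the Euclidean one, and $\Omega$ converges to the cone $C_p$ (the dual of the normal cone, which is pointed since the normal cone is $3$-dimensional). In the Euclidean limit, cutting a pointed cone by a plane $\{x\cdot u=-1\}$ changes $M$ by $t\,\delta_M$, where $\delta_M$ is the mean-width defect of the cone truncation, and changes $S$ by $-t^2\delta_S$. The crucial identity I expect is $\delta_M=0$ to first order: in Euclidean space the total mean curvature is $2\pi$ times the mean width, and truncating near a vertex changes the support function only on the set of directions in the normal cone, by an amount $\le t$. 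This yields $\Delta M=O(t)\cdot|\text{normal cone}|$ rather than $o(t)$.

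\textbf{Step 2 (restore area and compare).} Take the parallel body of $\Omega_t$ at distance $\varepsilon$. Then $S$ increases at rate about $M(\Gamma_t)\varepsilon$, and $M$ increases at rate about $(2\cdot 2\pi + 2S)\varepsilon$, by the hyperbolic Steiner formula. Restoring $\Delta S\sim t^2$ therefore needs $\varepsilon\sim t^2$, which costs $O(t^2)$ in $M$. The competitor has $M<M(\Gamma)$ exactly when the truncation gain dominates: we need $-\Delta M \gtrsim t$ with a positive constant. The gain comes precisely from the normal-cone mass. Via the Gauss-map description, $M$ includes $\int$ of the support function against the area measure on the normal cone directions. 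Removing the vertex region decreases the support function by $\approx t\,(u\cdot\nu)$ on an open set of normals $\nu$ of positive solid angle, which gives $-\Delta M\ge c\,t$ with $c>0$ determined by $|N(p)\cap\SSS^2|>0$. For $1$-singular or regular points, this solid angle is zero, which is why the argument detects only vertices.

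\textbf{Main obstacle.} Making Step 2's gain estimate rigorous in $\HH^3$ is the hard part. Hyperbolic total mean curvature is not linear in a support function, so I would localize: in a ball of radius $O(t)$, compare with the Euclidean quantities via normal coordinates, with error $O(t^3)$ for areas and $O(t^2)$ for $M$. I would then use the local formula for $M$ as the curvature measure of the normal bundle, whose vertex part is $\int_{N(p)\cap \SSS^2}(\cdots)$ and is positive, scaling like $t$. Combining the $-ct$ gain with the $O(t^2)$ costs, for small $t$ we get $M(\widetilde\Gamma_t)<M(\Gamma)$ with $S(\widetilde\Gamma_t)=S$, contradicting minimality.
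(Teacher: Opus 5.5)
Your scheme matches the paper's. You truncate at the vertex by the plane at distance $t$ whose normal lies in the interior of the $3$-dimensional normal cone, show that the area drops by $O(t^2)$ while $M$ drops by at least $ct$, and then restore the area with an outer parallel surface at a cost of $O(t^2)$ in $M$.

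The substantive difference is how the linear gain in $M$ is obtained in $\HH^3$, which you leave as the ``main obstacle.'' The paper does not localize curvature measures. It uses the hyperbolic Crofton formula $M=W_1+2V$ with $W_1(\Omega)=\int\chi(L\cap\Omega)\,dL_2$, so that
\[
M(\Gamma)-M(\Gamma_t)=\int_{\mathcal L}dL_2+2V(\Omega\setminus\Omega_t),
\]
where $\mathcal L$ is the set of planes meeting $\Omega$ but missing $\Omega_t$.
\begin{itemize}
\item The volume term is nonnegative, so it can only help.
\item In the Beltrami--Klein model centered at $p$, the paper's pyramid lemma shows that $\mathcal L$ contains every plane with normal within $\delta$ of $u$ passing within distance $c_1t$ of $p$ on the side of $\Omega$.
\item These planes have measure $\gtrsim t$, since the density is $\approx 2\,dR\wedge d\mu_{\SSS^2}$ near $R=0$.
\end{itemize}
The pyramid lemma is exactly the dual of your statement that the support function drops by at least $ct$ on a neighborhood of $u$. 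So in $\HH^3$ the Crofton measure does the job you wanted the support function to do, and the non-linearity of hyperbolic $M$ never enters.

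Your localization route can also be completed, and it has the merit of using only the infinitesimally Euclidean structure at $p$. However, it needs two further ingredients: locality of curvature measures, and a quantitative hyperbolic/Euclidean comparison for non-smooth bodies. It should also be run in the Beltrami--Klein chart rather than in exponential normal coordinates. Only a projective chart keeps $\Omega$ and $\Omega_t$ Euclidean-convex and the cutting plane flat, and your mean-width formula requires both.

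For area restoration, the paper uses the exact invariant $M^2-16\pi S-4S^2$ along the parallel flow instead of first-order Steiner rates. Either gives an $O(t^2)$ cost.

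Two corrections to your write-up:
\begin{itemize}
\item The sentence in Step 1 asserting that $\delta_M=0$ to first order is wrong and should be removed. If it held, the argument would collapse, and your Step 2 in fact uses $\delta_M>0$.
\item The drop of the support function is not $\approx t\langle u,\nu\rangle$ over the whole normal cone; it degenerates at the cone's boundary. A uniform lower bound $ct$ on a small ball around $u$ is all you need.
\end{itemize}
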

	Theorem \ref{thm:santalo-sing} shows that even if the minimizer of Santal\'{o}'s problem is non-smooth and singular, it cannot have points that behave like the vertex of a cone or the vertex of a triangular pyramid.  This theorem can help to rule out some convex surfaces as minimizers of Santal\'{o}'s problem, such as polytopes or cones in $\HH^3$. 
	
	

	The rest of this paper is organized as follows. In Section \ref{sec:1}, we list some notations and facts about the known examples, and review the background in convex geometry. In Section \ref{sec:2}, we will prove Proposition \ref{thm:santalo-conj2}. In Section \ref{sec:3}, we will construct the new family of convex bodies whose total mean curvature is strictly less than those of the geodesic sphere and the flat double disk with the same area, thereby forming a family of counterexamples to \eqref{eq:santalo-conj2} and proving Theorem \ref{thm:counterexample}. In Section \ref{sec:4}, we will prove Theorem \ref{thm:santalo-sing} using a local argument based on estimating the total mean curvature of a new convex body truncated by a family of planes at the singular point. We will discuss open questions and further study directions related to Santal\'{o}'s problem at the end of this paper.
	
	\section{Preliminaries}\label{sec:1}
	
	In this paper, any ambient manifold $N$ is an $(n+1)$-dimensional space form. A \emph{convex set} $X$ in $N$ is a set whose convex hull $\operatorname{conv}(X)$ is itself. A \emph{convex hypersurface} is the boundary of a convex set. If the ambient manifold $N$ has dimension $\dim(N) = 3$, then we call the convex hypersurface a \emph{convex surface}. A \emph{convex body} is a bounded closed convex set.
	
	When a hypersurface is $C^2$, we have the following equivalent definition for convexity: A \emph{convex} hypersurface $\Gamma$ of an ambient manifold $N$ is a closed embedded submanifold of codimension one which, when properly oriented, 
	has positive semidefinite second fundamental form $\mathrm{I\!I}_\Gamma$. A \emph{strictly convex} hypersurface $\Gamma$ of $N$ is a convex hypersurface with positive definite second fundamental form.
	
	\subsection{Naveira-Solanes' counterexample}\label{subsec.prelim.1}
	Here we will list some facts about the geometry of hyperbolic 3-space $\HH^3$ and examine Naveira-Solanes' counterexample \cite{natario2015} mentioned above to show that the flat double disks form a counterexample to \eqref{eq:santalo-conj1}. See also \cite[Note 1]{ghomi-spruck2023}.
	
	For a geodesic sphere $\SSS(r)$ with radius $r$ in $\HH^3$,
	it is well known that its enclosed volume $V$,  surface area $S$ and total mean curvature $M$ are
	\begin{align*}
	V = V(B(r)) &= 2\pi 
	\left( \sinh(r) \cosh(r) - r \right) ,
	\\
	\nonumber
	S = S(\SSS(r)) &= 4\pi \sinh(r)^2 ,
	\\
	\nonumber
	M = M(\SSS(r)) &= 8\pi \sinh(r) \cosh(r) ,
	\end{align*}
	and in particular, we have
		$M = \sqrt{16\pi S + 4S^2}$,
	and we may denote this function by $M_1$, that is,
	\begin{align}\label{defn.M1}
		M_1(S) := \sqrt{16\pi S + 4S^2}.
	\end{align}
	
	The flat double disk in Naveira-Solanes' counterexample \cite{natario2015} mentioned above is constructed by taking a 
	disc $D(r)$ of radius $r$ in a totally geodesic surface isometric to $\HH^2$ in hyperbolic space $\HH^3$. Its total mean curvature can be computed by taking the limit of the total mean curvature of its outer parallel surface $\Gamma = \Gamma(r,\epsilon)$ at distance $\epsilon$. Its surface area consists of two faces of the disk $D(r)$:
	$$
	S = 2 S_{\HH^2}(D)=4\pi(\cosh(r)-1),
	$$
	where the $S$ on the left-hand side denotes the total surface area of the flat double disk $D$ in $\HH^3$ and the  $S_{\HH^2}(D)$ denotes the area of the disk with radius $r$ in $\HH^2$.
	
	Note that $\Gamma$ consists of a pair of topological disks parallel to $D$ plus a half tube $T$ about $\partial D$. The mean curvature of the disks vanishes as $\epsilon\to 0$. On the other hand, $S(T) \to L(\partial D)\pi\sinh(\epsilon)$ up to first order (where $L(\p D)$ denotes the perimeter of $\p D$ in $\HH^2$), since the full tube about $\partial D$ is foliated by (geodesic) circles of radius $\epsilon$. Therefore $ M(\Gamma)\to \partial S(T)/\partial\epsilon=L(\partial D) \pi\cosh(\epsilon)$. Thus
	$$
	M = \lim_{\epsilon\to 0} M(\Gamma)=L(\partial D)\pi
	=2\pi^2\sinh(r).
	$$ 
	Thus for the flat double disk, its total mean curvature $M = M(D(r))$ and surface area $S = S(D(r))$ satisfies
	\begin{align*}
		M = \frac{\pi}{2}\sqrt{8\pi S+ S^2} =  \sqrt{ 2\pi^3  S + \frac{\pi^2}{4}S^2}.
	\end{align*}
	We denote this function by $M_2$, that is,
	\begin{align}\label{defn.M2}
	M_2(S) := \sqrt{ 2\pi^3 S + \frac{\pi^2}{4}S^2}.
	\end{align}
	
	
	Clearly when $S$ is large enough, $M_2(S) < M_1(S)$. In fact, $M_2(S) < M_1(S)$ for all $S > \hat{S} = \frac{8\pi(\pi^2-8)}{16-\pi^2} \approx 7.665$. 
	
	In this paper, we will use the Beltrami-Klein model for hyperbolic 3-space. Specifically, we view $\HH^3$ as $ (B_{\RR^3}(1) , g^{BK})$. For $x = (x_1,x_2,x_3) \in B_{\RR^3}(1) \subset \RR^3$, the Riemannian metric tensor on $T_{x}\HH^3$ is 
	$$ g^{BK}(x) = \frac{1}{1-\sum x_i^2} \left( \sum dx_i\otimes dx_i \right) + \frac{1}{(1-\sum x_i^2)^2} 
	\left( \left(\sum x_i dx_i\right) \otimes \left(\sum x_i dx_i\right)  \right), $$
	where all sums are taken as $\sum_{i=1}^{3}$.
	
	The following coordinate identity map gives a canonical homeomorphism from $\HH^3$ to $ B_{\RR^3}(1)$.
	\begin{align}\label{defn.BK.identity}
		\operatorname{Id}:(B_{\RR^3}(1) , g^{BK}) \rightarrow& (B_{\RR^3}(1), g^{\RR^3})\\
		\nonumber
		(x_1,x_2,x_3) \mapsto& (x_1,x_2,x_3).
	\end{align}
	
	An advantage of the Beltrami-Klein model is that geodesics are preserved. That is, for a continuous curve $\gamma = \gamma(t)$ in $\HH^3 =(B_{\RR^3}(1) , g^{BK}) $, $\gamma$ is a geodesic in  $(B_{\RR^3}(1) , g^{BK})$ if and only if $\operatorname{Id}(\gamma)$ is a geodesic in $(B_{\RR^3}(1), g^{\RR^3})$ (but not necessarily of the same speed).
	
	\subsection{Notations and Facts about Geometric Flows}\label{subsec.prelim.2}
	Here we list some evolution equations for geometric flows and recall outer parallel surfaces.
	
	A \emph{geometric flow}  of a $C^2$ closed hypersurface $\Gamma$ in a Riemannian $(n+1)$-manifold $N$ \cite{andrews-chow2020,giga2006,huisken-polden1999} is a one-parameter family of immersions $X\colon\Gamma\times[0,T)\to N$, $X_t(\cdot):=X(\cdot, t)$,  given by
	\begin{equation}\label{eq:hmcf}
		X_t'(p)=-F_t(p)\nu_t(p),\quad\quad\quad X_0(p)=p,
	\end{equation}
	where $(\cdot)':=\partial/\partial t(\cdot)$,
	$\nu_t$ is a  normal vector field along $\Gamma_t:=X_t(\Gamma)$, and the \emph{speed function} $F_t$ is a smooth function on $\Gamma_t$. Let $\nu_t(p)$ be the normal and $\kappa_i^t(p)$ the principal curvatures of $\Gamma_t$ at the point $X_t(p)$. 
	Let $d\mu_t$ be the area element induced on $\Gamma$ by $X_t$. $G_t:=\det(\mathrm{I\!I}_t)$ and $H_t:=\textup{trace}(\mathrm{I\!I}_t)$ are the \emph{Gauss-Kronecker curvature} and \emph{mean curvature} of $\Gamma_t$, respectively.
	
	By \cite[Thm. 3.2(v)]{huisken-polden1999} and \cite[Lem. 7.4]{huisken-polden1999}, for any geometric flow,
	\begin{align}
		\label{eq:evolution}
		\frac{d}{dt}(H_t) &= 
		\Delta_t F_t+\left(|\mathrm{I\!I}_t|^2+\operatorname{Ric}(\nu_t)\right)F_t,
		\\
		\nonumber
		\frac{d}{dt}(d\mu_t) &= -F_tH_td\mu_t,
		\\
		\nonumber
		\frac{d}{dt} S(\Gamma_t) &= -\int_{\Gamma} F_t H_t d\mu_t,
	\end{align}
	where $|\mathrm{I\!I}_t|:=\sqrt{\sum(\kappa_i^t)^2}$, $\Delta_t$ is the Laplace-Beltrami operator induced on $\Gamma$ by $X_t$, and $\operatorname{Ric}(\nu_t)$ is the Ricci curvature of $N$ at the point $X_t(p)$ in the direction of $\nu_t(p)$, i.e., the sum of sectional curvatures of $N$ with respect to a pair of orthogonal planes which contain $\nu_t(p)$.
	
	
	In particular, when the speed function $F_t \equiv 1$, the geometric flow exists for all $t>0$ and is called \emph{geodesic normal flow}. The hypersurface $\Gamma_t$ obtained from $\Gamma$ under geodesic normal flow is called the \emph{outer parallel surface} of $\Gamma$ at distance $t$. An equivalent definition of outer parallel surface is given by $\Gamma_t: = \widehat{d_{\Gamma}}^{-1}(t)$, where $\widehat{d_{\Gamma}}$ is the distance function of the convex surface $\Gamma$, see for example \cite[Sec. 2 \& 3]{ghomi-spruck2022}. Hence the outer parallel surface is well-defined for any convex surface as an initial surface.
	
	A fact which will be used frequently below is that for any convex surface $\Gamma$ in $\HH^3$, its outer parallel surface $\Gamma_t$ is $C^{1,1}$ and convex for any $t>0$ \cite[Sec. 2 \& 3]{ghomi-spruck2022}. In particular, for $t>0$, $\Gamma_t$ is twice differentiable almost everywhere and so its total mean curvature $M(\Gamma_t)$ is well-defined and positive.
	We will prove the following invariant along geodesic normal flow:
	\begin{prop}\label{parallel.surface.property}
		Let $\Gamma$ be a convex surface in $\HH^3$, and $\Gamma_t$ be its outer parallel surface at distance $t$. Then for any $t\geq 0$, 
		\begin{align}\label{eq:parallel.surface.property}
			M(\Gamma_t)^2 - 16\pi S(\Gamma_t) - 4S(\Gamma_t)^2
			=
			M(\Gamma)^2 - 16\pi S(\Gamma) - 4S(\Gamma)^2
		\end{align}
	\end{prop}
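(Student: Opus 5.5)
The plan is to differentiate both $S(\Gamma_t)$ and $M(\Gamma_t)$ along the geodesic normal flow and check that the quantity $Q(t):=M(\Gamma_t)^2-16\pi S(\Gamma_t)-4S(\Gamma_t)^2$ has zero derivative. I would first prove this for smooth $\Gamma$ and then remove the regularity assumption by approximation.

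\textbf{Smooth strictly convex case.} Take the flow \eqref{eq:hmcf} with $F_t\equiv -1$ (outward motion), or equivalently use the normal variation with unit outward speed; the sign is fixed so that area increases. The evolution equations \eqref{eq:evolution} then give
$$\frac{d}{dt}S(\Gamma_t)=M(\Gamma_t),\qquad \frac{d}{dt}M(\Gamma_t)=\int_{\Gamma_t}\left(H_t^2-|\mathrm{I\!I}_t|^2-\operatorname{Ric}(\nu_t)\right)d\mu_t .$$
The Laplacian term drops because $F$ is constant, and the sign of the $|\mathrm{I\!I}|^2+\operatorname{Ric}$ term flips with $F$. I would double-check this against the sphere, where $S=4\pi\sinh^2 r$ and $M=8\pi\sinh r\cosh r$.

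In $\HH^3$ we have $\operatorname{Ric}(\nu)=-2$ and $H^2-|\mathrm{I\!I}|^2=2\kappa_1\kappa_2$. By the Gauss equation the intrinsic Gauss curvature is $K=\kappa_1\kappa_2-1$. Hence
$$\frac{d}{dt}M=\int(2\kappa_1\kappa_2+2)\,d\mu=2\int K\,d\mu+4S=8\pi+4S,$$
where the last step uses Gauss--Bonnet on the sphere $\Gamma_t$, so $\int K=4\pi$. Then
$$Q'=2MM'-16\pi S'-8SS'=2M(8\pi+4S)-16\pi M-8SM=0.$$
Sanity check on the sphere: $M^2-16\pi S-4S^2=0$ there, consistent with $M_1$.

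\textbf{General convex $\Gamma$ and the main obstacle.} For an arbitrary convex $\Gamma$, and for $t>0$ where $\Gamma_t$ is only $C^{1,1}$, I would use a Steiner-type argument. The total mean curvature of a convex surface is defined as a limit over outer parallel surfaces \cite[Section 3]{ghomi-spruck2023}, and the relations $S'=M$ and $M'=8\pi+4S$ hold in integrated form for $C^{1,1}$ convex surfaces; they can be obtained, for example, from the hyperbolic Steiner formula for parallel bodies. The cleanest route is approximation. Choose smooth strictly convex $\Gamma^k\to\Gamma$ in Hausdorff distance. Then $\Gamma^k_t\to\Gamma_t$ for each $t$. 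By continuity of $S$ and $M$ under Hausdorff convergence of convex bodies (the quermassintegrals are continuous, and $M=W_1+2\operatorname{Vol}$), the identity $Q_k(t)=Q_k(0)$ passes to the limit.

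The step needing the most care is this continuity of $M$ at $t=0$ for a nonsmooth $\Gamma$, including degenerate $\Gamma$ such as the flat double disk. I would handle it by noting that $M(\Gamma)$ is defined as $\lim_{t\to0^+}M(\Gamma_t)$. Once the identity is known for all $t>0$, where $\Gamma_t$ is approximated in a controlled way, letting $t\to0^+$ gives the claim at $t=0$. As a check, the flat double disk has $M(\Gamma)=2\pi^2\sinh r$, consistent with the limit of its parallel surfaces computed in Subsection \ref{subsec.prelim.1}.
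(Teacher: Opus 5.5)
Your argument is correct and is essentially the paper's: along the outward unit-speed normal flow, $S'=M$, and by the Gauss equation and Gauss--Bonnet $M'=2\int(G+1)\,d\mu=8\pi+4S$, so the quantity has zero derivative. Your choice $F\equiv-1$ is the sign-consistent version of the paper's $F_t\equiv 1$ under its convention $X_t'=-F_t\nu_t$. The one real difference is how you handle regularity: the paper applies the evolution equations directly to the $C^{1,1}$ surfaces $\Gamma_t$, $t>0$, on the grounds that they are twice differentiable a.e., and passes to $t=0$ implicitly, whereas your approximation by smooth strictly convex bodies, with Hausdorff continuity of $S$ and $M=W_1+2V$, justifies that step more carefully.
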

	\begin{proof}
		For any $t>0$, since $\Gamma_t$ is twice differentiable almost everywhere, we may apply \eqref{eq:evolution} 
		 and plug in $F_t\equiv 1$ to get
		\begin{align}
			\nonumber
			\frac{d}{dt} M(\Gamma_t) 
			=&
			\int_{\Gamma}\left(\frac{d}{dt}(H_t)d\mu_t+H_t\frac{d}{dt}(d\mu_t)\right)\\ 
			\nonumber
			=&
			\int_{\Gamma}\Big(\Delta_t F_t+\big(|\mathrm{I\!I}_t|^2-(H_t)^2\big)F_t +\operatorname{Ric}(\nu_t)F_t\Big)d\mu_t
			\\
			\nonumber
			=
			&
			2
			\int_{\Gamma} (G_t + 1 ) d\mu_t
			\\
			\label{eq:evolution-parallel}
			\frac{d}{dt} S(\Gamma_t)
			=
			&
			\int_{\Gamma} H_t d\mu_t
			=
			M(\Gamma_t).
		\end{align}
		By Gauss' equation, for all $p\in\Gamma_t$, 
		\begin{equation*}
			G_t(p)=K_{\Gamma_t}(p)-K_N(T_p\Gamma_t),
		\end{equation*}
		where $K_{\Gamma_t}$ is the sectional curvature of $\Gamma_t$, and $K_N(T_p\Gamma_t)$ is the sectional curvature of $N$ with respect to the tangent plane $T_p \Gamma_t\subset T_p N$.   Thus by Gauss-Bonnet theorem, 
		\begin{equation*}
			\int_{\Gamma} G_t d\mu_t=4\pi -\int_{p\in\Gamma_t} K_N(T_p\Gamma_t)
			=
			4\pi + S(\Gamma_t).
		\end{equation*}
		Substituting this into \eqref{eq:evolution-parallel}, we have for any $t>0$,
		\begin{align*}
			\frac{d}{dt} \left( M(\Gamma_t)^2 - 16\pi S(\Gamma_t) - 4S(\Gamma_t)^2 \right) = 0,
		\end{align*}
		which implies that the quantity
		$M(\Gamma_t)^2 - 16\pi S(\Gamma_t) - 4S(\Gamma_t)^2$ is a constant for all $t\geq 0$.
	\end{proof}
	
	\subsection{Quermassintegrals of convex bodies in hyperbolic spaces}\label{subsec.prelim.3}
	
	Here we list definitions and properties of quermassintegrals in Euclidean spaces and hyperbolic spaces.
	One may refer to \cite{solanes2006}, \cite{santalo1976book} and \cite{wang-xia2014} for more background.
	
	For a (geodesically) convex domain $K$ in a $(n+1)$-dimensional space form $N$ of constant curvature $C \in \{ -1,0,1 \}$, an equivalent definition of \emph{quermassintegrals} is the following:
	\begin{align}\label{defn.quermass}
		&W_{k-1}(K):=\frac{(n+1-k)\omega_{k-1}\cdots\omega_0}{\omega_{n-1}\cdots\omega_{n-k}}\int_{\mathcal{L}_k(N)}\chi(L_k\cap K)dL_k(N),\\
		\nonumber
		& \quad k=1,\cdots,n,
	\end{align}
	where $\omega_{k}$ denotes the Lebesgue measure of the $k$-dimensional unit
	sphere $\SSS^k$. Here $\mathcal{L}_k(N)$ is the space of $k$-dimensional totally geodesic subspaces $L_k$ in $N$ and $dL_k(N)$ is the natural (invariant) measure on $\mathcal{L}_k(N)$. The function $\chi$ is given by $\chi(K)=1$  if $K\neq \emptyset$ and $\chi(\emptyset)=0.$
	For simplicity, we also use the convention $W_{-1}(K)=V(K)$ for the volume of $K$, and $W_{n}(K)={\omega_{n-1}}$.
	By definition, 
	$W_{0}(K)=S(\p K)$.
	
	In Euclidean spaces, if $\p K$ is a $C^2$ hypersurface, the quermassintegrals coincide with the definition in terms of curvature integrals (integral of elementary symmetric functions of curvatures on $\p K$). However, the quermassintegrals and the curvature integrals in hyperbolic spaces or spheres do not coincide. Nevertheless they are closely related. In a space form $N$ with dimension $(n+1)$ and constant curvature $C \in \{ -1,0,1 \}$, if $\p K$ is a $C^2$ hypersurface, then
	we have
	\begin{align}\label{defn.quermass.1}
			W_1(K)=&\int_{\p K} H d \mu_g+ n C V(K) = M(\p K) + n C V(K), \\
			\label{defn.quermass.general.k}
			W_k(K)=&\int_{\p K} \sigma_k(\kappa) d \mu_g+\frac{C(n-k+1)}{k-1} W_{k-2}(K),  \quad \text{ for } 2\leq k \leq n, 
	\end{align}
	Since total mean curvature is also well-defined for general convex bodies (or surfaces) in Cartan-Hadamard spaces without any regularity assumption (see e.g. \cite[Section 3]{ghomi-spruck2023}), \eqref{defn.quermass.1} also holds for a general convex body $K$.
	
	Combining \eqref{defn.quermass} and \eqref{defn.quermass.1}, and plugging $n=2$, $k=1$, and $C=0$ or $C=-1$ respectively, we have
	\begin{itemize}
		\item If $K$ is a convex body in $\HH^3$, then 
		\begin{align}\label{quermass.formula.hyperbolic}
			M(\p K)  =  W_1(K) + 2 V(K) = \int_{\mathcal{L}_2(\HH^3)}\chi(L_2\cap K)dL_2(\HH^3),
		\end{align}
		\item If $K$ is a convex body in $\RR^3$, then
		\begin{align}\label{quermass.formula.euclidean}
			M(\p K)  =  W_1(K) = \int_{\mathcal{L}_2(\RR^3)}\chi(L_2\cap K)dL_2(\RR^3).
		\end{align}
	\end{itemize}
	$dL_2(N)$ is induced by the action of the isometry group of $N$ on $\mathcal{L}_2(N)$. One may refer to \cite[Chapter 4.4]{schneider2014} for more details. To estimate total mean curvature using the formulas above, we now give an explicit form of the invariant measure $d L_2(N)$ for $N = \HH^3$ or $N = \RR^3$. After taking an origin $O$ in $\RR^3$, we may parameterize $\mathcal{L}_2(\RR^3)$ as follows:
	\begin{align*}
		\mathcal{L}_2(\RR^3) = \left\{ 
		\operatorname{Plane}(\nu , R):\nu \in \SSS^2, R>0
		 \right\},
	\end{align*}
	where $\operatorname{Plane}(\nu, R)$ is the plane in $\RR^3$ with unit outer normal $\nu$, and the distance from $\operatorname{Plane}(\nu, R)$ to $O$ is $R$, or more precisely,
	\begin{align}\label{grassmannian.paramter.R}
		\operatorname{Plane}(\nu, R): = \left\{ p\in \RR^3: \left\langle p , \nu \right\rangle = R \right\},
	\end{align}
	where $ \left\langle \cdot , \cdot \right\rangle $ is the canonical inner product in $\RR^3$.
	Under this parametrization, the invariant measure can be formulated as
	\begin{align}\label{invariant.measure.RR3}
		dL_2(\RR^3) = 2 d R \wedge d\mu_{\SSS^2}(\nu),
	\end{align}
	where $dR$ is the Lebesgue measure on $R\in \RR$, and $d\mu_{\SSS^2} $
	is the Lebesgue measure on $\nu\in \SSS^2$.
	
	The parametrization of $L_2(\HH^3)$ follows similarly, and we will use similar notations. After taking an origin $O$ in $\HH^3$, we may parameterize $\mathcal{L}_2(\HH^3)$ as follows:
	\begin{align*}
		\mathcal{L}_2(\HH^3) = \left\{ 
		\operatorname{Plane}(\nu , R):\nu \in \SSS^2, R>0
		\right\},
	\end{align*}
	where $\operatorname{Plane}(\nu, R)$ is a plane (totally geodesic $2$-dimensional submanifold) in $\HH^3$ that that is analogous to its counterpart in $\RR^3$. In the Beltrami-Klein model where the origin is taken as the same origin $O$, we may formulate it as:
	\begin{align}\label{grassmannian.paramter.H}
		\operatorname{Plane}(\nu, R): = \operatorname{Id}^{-1} \left( \operatorname{Plane}(\nu, \tanh(R)) \cap B_{\RR^3}(1) \right)
	\end{align}
	where $\operatorname{Id}$ is the Beltrami-Klein identity map given by \eqref{defn.BK.identity}, and the $\operatorname{Plane}(\nu, \tanh(R))$ on the right-hand side is a plane in $\RR^3$ defined in \eqref{grassmannian.paramter.R}. Note that the distance between the hyperbolic plane $\operatorname{Plane}(\nu, R)$ defined in \eqref{grassmannian.paramter.H} and the origin $O$ is still $R$. Under this parametrization, the invariant measure can be formulated as
	\begin{align}\label{invariant.measure.HH3}
		dL_2(\HH^3) = 2 (1 + 2 \sinh^2(R)) d R \wedge d\mu_{\SSS^2}(\nu).
	\end{align}

	\subsection{Convexity and Normal Cones} \label{subsec.prelim.4}
	Here we review the definitions and facts about the normal cone and singular (regular) points, together with basics about curvature measures used in convex geometry. One may refer to \cite{schneider2014} for definitions and facts in Euclidean spaces. The counterparts in space forms are similar.
	
	A \emph{supporting hyperplane} $P$ of a closed set $X$ is a totally geodesic hypersurface in $N$ such that $X$ is entirely contained in one of the two closed half-spaces bounded by the hyperplane $P$, and $P \cap \p X \neq \emptyset$. We say that $P$ is a supporting hyperplane of $X$ at $p$, if $P$ is supporting hyperplane of $X$, and $p \in P \cap \p X $. If the ambient manifold $N$ has dimension $\dim(N) = 3$, then we call the supporting hyperplane a \emph{supporting plane}. 
	
	If $K$ is a convex body, and $p \in \p K$, then clearly there exists a supporting hyperplane $P$ of $K$ at $p$. The supporting hyperplane at a point $p$ may not be unique. The set of all supporting hyperplanes at $p\in \p K$ with respect to the convex body $K$ is denoted by $\operatorname{Supp}(p,K)$.
	
	Conversely, if $K$ is a closed set with non-empty interior such that every point $p$ on the boundary has a supporting hyperplane $P$, then $K$ is a convex body and $K$ is the intersection of all its supporting closed half-spaces (namely the half-space bounded by $P=P(p)$ that contains K).
	$X$ is a {strictly convex set} if every supporting hyperplane of $X$ intersects $X$ at only a single point.
	
	In the following context of this subsection, we take the ambient manifold $N$ to be the hyperbolic 3-space $\HH^3$.
	
	If $\operatorname{Supp}(p,K)$ consists of only one element (one plane), we say $p$ is a \emph{regular} point of the convex surface $\p K$. In this case, there exists a unique supporting plane of $p\in \p K$ with respect to $K$. Otherwise, we say $p$ is a \emph{singular} point of $\p K$. See \cite[Chapter 2.2]{schneider2014}.
	If $p$ is singular, let $r=r(p)$ be the maximum number of linearly independent supporting planes at $p$ with respect to $K$. Since $\dim (T_p \HH^3) = 3 $, we know $r=2$ or $r=3$. We call $p$ a $(3-r)$\emph{-singular point}. 
	
	We denote the set of regular points on $\p K$ by $\operatorname{Reg}(\p K)$. For $s=0,1$, we denote the set of $s$-singular points on $\p K$ by $ \operatorname{Sing}^{s}(\p K) $. 
	
	If $K$ is a convex body in $\HH^3$, for any $p\in \p K$ and any supporting plane $P\in \operatorname{Supp}(p,K)$, $P$ can be determined by its non-zero normal vector $\nu(p,H) \in T_p \HH^3$, that is, $\nu(p,H)$ is the unique unit vector up to orientation such that
	\begin{align*}
		P  = \left\{ \exp_{p}(v) : v\in T_p\HH^3, \bar{g}\left(v,\nu(p,H)\right) = 0 \text{ in } T_p\HH^3 \right\},
	\end{align*}
	where $\bar{g}$ is the Riemannian metric of $\HH^3$.
	We take such a vector $\nu$ to be outward with respect to $K$, that is, $\exp_{p}(t\nu)$ lies in $\HH^3 \backslash K$ for any $t>0$. 
	
	We define the \emph{normal cone} $N(p,K)$ of $p \in \p K$ with respect to the convex body $K$ as the set of outer normal vectors (not necessarily unit vectors) of all supporting planes of $p$ with respect to $K$. That is,
	 \begin{align}\label{defn.notation.unit.normal}
	 	N(p,K) = \{O\} \cup \left\{\lambda \nu(p,H) : \lambda>0, H \in \operatorname{Supp}(p,K) \right\} \subset T_p \HH^3. 
	 \end{align}
	
	Note that the vectors in a normal cone are not necessarily unit vectors. We denote the set of unit normal vectors at a point $p\in \p K$ by $\mathcal{N}(p,K)$, that is,
	\begin{align*}
		\mathcal{N}(p,K) := \left\{ \nu \in N(p,K) : \|\nu \| =1 \right\}.
	\end{align*}
	The unit normal bundle $\NNN(K)$ is defined by $\NNN(K) := \bigcup_{p\in \p K} \NNN(p,K)$.
	In this paper, we will also use the notation $\operatorname{relint}\NNN(p,K)$, which means the relative interior of $\NNN(p,K)$ in its affine hull in $\{ \nu \in T_p\HH^3: \|\nu\| = 1 \}$, which is isometric to $\SSS^2$.
	
	\begin{claim}
		$N(p,K)$ is a convex cone in $(T_p \HH^3, \bar{g})$, where $\bar{g}$ denotes the Riemannian metric in $\HH^3$.
	\end{claim}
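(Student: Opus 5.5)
The plan is to characterize $N(p,K)$ as a polar cone, since polar cones are automatically closed convex cones. I will use the exponential map at $p$ together with geodesic convexity of $K$. Set
\[
C_p := \{ v \in T_p\HH^3 : \exp_p(tv) \in K \text{ for some } t>0 \}\cup\{O\},
\]
the cone of feasible directions of $K$ at $p$. I will then show that
\[
N(p,K) = \{ w \in T_p\HH^3 : \bar g(w,v) \le 0 \ \text{for all } v \in C_p \}.
\]
The right-hand side is an intersection of closed half-spaces of $T_p\HH^3$ through the origin, so it is a closed convex cone. This gives the claim at once.

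For the inclusion ``$\subseteq$'', take $w=\lambda\nu(p,P)$ with $\lambda>0$ and $P\in\operatorname{Supp}(p,K)$. The plane $P$ is $\exp_p$ of the $\bar g$-orthogonal complement of $\nu$. Since $\exp_p$ is a diffeomorphism of $T_p\HH^3$ onto $\HH^3$, it maps the two half-spaces $\{\bar g(\cdot,\nu)\le 0\}$ and $\{\bar g(\cdot,\nu)\ge 0\}$ onto the two closed half-spaces bounded by $P$. This holds because geodesics through $p$ stay on one side of the totally geodesic plane $P$, which one can see in the Beltrami--Klein model centered at $p$, where $P$ is an affine plane. The outward convention puts $K$ in the image of $\{\bar g(\cdot,\nu)\le0\}$. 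Hence every $v\in C_p$ satisfies $\bar g(v,\nu)\le 0$.

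For the reverse inclusion, take a nonzero $w$ with $\bar g(w,v)\le0$ for all $v\in C_p$. Let $P_w=\exp_p(w^\perp)$. I must show that $K$ lies in the half-space $\exp_p\{\bar g(\cdot,w)\le0\}$. Take any $q\in K$ and set $v=\exp_p^{-1}(q)$. By geodesic convexity the segment $\exp_p(tv)$, $t\in[0,1]$, lies in $K$, so $v\in C_p$ and therefore $\bar g(v,w)\le0$. Thus $P_w$ supports $K$ at $p$, and $w$ is a positive multiple of its outer normal, so $w\in N(p,K)$.

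The only delicate step is the identification of half-spaces under $\exp_p$. I will handle it in the Beltrami--Klein model with origin at $p$. There $\exp_p$ maps rays from $p$ to straight segments, and $d\exp_p$ at $p$ is a multiple of the identity, since $g^{BK}(0)$ is Euclidean. So $\bar g$-orthogonality at $p$ coincides with Euclidean orthogonality, and hyperbolic planes through $p$ are Euclidean planes through the origin. Everything then reduces to the Euclidean statement. As an alternative, one can check convexity directly: given $w_1,w_2\in N(p,K)$, each defines a half-space containing $K$ in these coordinates, and the sum $w_1+w_2$ defines a plane whose half-space contains the intersection of the two.
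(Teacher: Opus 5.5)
Your argument is correct, but its main line differs from the paper's. The paper proves only what is asked, and does so directly. In Beltrami--Klein coordinates centered at $p$, if $\nu_1,\nu_2$ are outer normals of supporting planes, then the Euclidean plane through the origin with normal $\lambda\nu_1+(1-\lambda)\nu_2$ still supports $\operatorname{Id}(K)$. The paper writes this plane loosely as $\lambda\operatorname{Id}(H_1)+(1-\lambda)\operatorname{Id}(H_2)$. Pulling it back gives a supporting plane of $K$ whose unit outer normal is the normalized combination. That is precisely the alternative in your last sentence.

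Your main route instead identifies $N(p,K)$ with the polar of the cone $\RR_{>0}\exp_p^{-1}(K)$. This costs you the reverse inclusion, but it gives closedness for free. It also reduces questions about $N(p,K)$ to ordinary convex analysis in $T_p\HH^3$. For instance, it is what underlies (via Farkas' lemma) the formula $N(p,K)=\{t_1\nu_1+t_2\nu_2+t_3\nu_3:t_i\ge0\}$ asserted in Lemma~\ref{lemma.sing.pyramid}. Both proofs rest on the same fact: at $p$ the Beltrami--Klein metric is Euclidean, and planes and half-spaces through $p$ are Euclidean ones.

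Two minor points. First, in the reverse inclusion geodesic convexity is unnecessary, since $v=\exp_p^{-1}(q)\in C_p$ directly by taking $t=1$. Second, ``the outward convention puts $K$ in the image of $\{\bar g(\cdot,\nu)\le 0\}$'' should be taken as the definition of outward, not derived from the paper's literal wording ($\exp_p(t\nu)\notin K$ for all $t>0$). At a vertex, that literal condition can hold for both unit normals of the same supporting plane. The paper's own proof tacitly uses your reading as well.
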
 
	
	\begin{proof}[Proof of the Claim]
		It suffices to prove that for any non-zero vectors $\nu_1, \nu_2 \in N(p,K)$, and any $\lambda \in (0,1)$, $\lambda \nu_1 + (1-\lambda) \nu_2 \in N(p,K)$. If $\lambda \nu_1 + (1-\lambda) \nu_2=0$, the claim follows directly. Hence we assume $\lambda \nu_1 + (1-\lambda) \nu_2 \neq 0$ in the following proof.
		
		Clearly if $H$ is a supporting plane of $K$ in $\HH^3 =(B_{\RR^3}(1) , g^{BK}) $, then after being mapped into $B_{\RR^3}(1) \subset \RR^3$ as in \eqref{defn.BK.identity}, $\operatorname{Id}(H)$ is a supporting plane of $\operatorname{Id}(K)$ in  $(B_{\RR^3}(1), g^{\RR^3})$, where we choose the Beltrami-Klein model so that $ \operatorname{Id}(p) $ is the origin in $ (B_{\RR^3}(1), g^{\RR^3}) $. Moreover, $ \operatorname{Id}(K) $ is also a convex body in $ (B_{\RR^3}(1), g^{\RR^3}) $.
		
		By definition, there exist two totally geodesic planes $H_1, H_2$ in $\HH^3$ such that $\nu_1 = \nu(p,H_1), \nu_2 = \nu(p,H_2)$. Consider $ \operatorname{Id}(H_1) $ and $ \operatorname{Id}(H_2) $ as planes in $ (B_{\RR^3}(1), g^{\RR^3}) $ passing through the origin.
		Consider $ \overline{H}_3 = \lambda \operatorname{Id}(H_1) + (1-\lambda) \operatorname{Id}(H_2) $. Since $ \operatorname{Id}(K) $ is convex, and both 
		$ \operatorname{Id}(H_1) $ and $ \operatorname{Id}(H_2) $ support $ \operatorname{Id}(K) $, $ \overline{H}_3 $ is also a supporting plane of $ \operatorname{Id}(K) $, hence $ H_3 := \operatorname{Id}^{-1} (\overline{H_3}) $ is a supporting plane of $ K $.
		
		Since $ \nu(p,H_3) = \frac{1}{\| \lambda \nu_1 + (1-\lambda) \nu_2 \|}\left(\lambda \nu_1 + (1-\lambda) \nu_2\right) $, we have $ \lambda \nu_1 + (1-\lambda) \nu_2 \in N(p,K) $.
	\end{proof}
	
	If $p$ is regular, then $N(p,K)$ is $1$-dimensional. In this case, there exists a unique unit outer normal vector in $T_p \HH^3$ at $p$. Otherwise, if $p$ is singular, then $N(p,K)$ is at least $2$-dimensional. 
	Since $\dim (T_p \HH^3) = 3$, we know if $p$ is singular, then $\dim N(p,K)=2$ or $\dim N(p,K) = 3$. If $p$ is a $0$-singular point, then $\dim N(p,K) = 3$. If $p$ is a $1$-singular point, then $\dim N(p,K) = 2$.   

	\section{Proof of Proposition \ref{thm:santalo-conj2}: Minimizers under special conditions}\label{sec:2}
	
	In this section, we will first prove the existence of a minimizer in Santal\'{o}'s problem. Then we will prove Proposition \ref{thm:santalo-conj2}, that is, the classification of minimizers in two cases: degenerate case and $C^2$ strictly convex case. The proofs of Proposition \ref{thm:santalo-conj2} follow from Guan \cite{guan2024}.
	
	\subsection{Existence of a minimizer}
	
	For any fixed real number $S_0 \in (0,\infty)$, consider the set of convex bodies defined by $\KK^3(S_0) = \left\{ K\in \KK^3(\HH^3) : \p K \text{ has surface area } S_0 \right\} $, where $\KK^3(\HH^3)$ denotes the set of all convex bodies in $\HH^3$.
	By Ghomi-Spruck \cite{ghomi-spruck2023}, for any convex surface $\Gamma$ in $\HH^3$, we have
	$$ M(\Gamma) \geq \sqrt{16\pi S(\Gamma)+ 2S(\Gamma)^2 }, $$
	hence we know for any $K\in \KK^3(S_0)$,
	$ M(\p K) \geq  \sqrt{16\pi S_0+ 2S_0^2 }, $
	thus
	$$ \inf \left\{ M(\p K) : K\in \KK^3(S_0) \right\} \geq \sqrt{16\pi S_0+ 2S_0^2 } >0. $$
	
	We denote $M_0 := \inf \left\{ M(\p K) : K\in \KK^3(S_0) \right\}$. 
	Consider a sequence of convex bodies $K_n$ such that 
	$\lim_{n \rightarrow \infty} M(\p K_n) = M_0 $. We denote $\Gamma_n:=\p K_n$. Without loss of generality, we assume $M(\Gamma_n) \leq M_0 + 1$ for all $n$.
	
	By the inequality relating diameter and total mean curvature in hyperbolic space \cite{wu-zheng2011}, there exists a constant $C$ such that for any convex closed connected embedded surface $\Gamma$ in $\HH^3$, 
	$$ \operatorname{diam}(\Gamma) \leq C \int_{\Gamma} H d\mu, $$
	where $\operatorname{diam}(\cdot)$ denotes the diameter of a set in $\HH^3$.
	Hence for all $n$, since $\p K_n$ are convex surfaces,
	$$ \operatorname{diam}(K_n) = \operatorname{diam}(\Gamma_n) \leq C \int_{\Gamma_n} H d\mu = C \cdot M(\Gamma_n) \leq C(M_0+1). $$
	Therefore by applying isometric translations to these convex bodies appropriately, we may assume for all $n$, $K_n$ is contained in the geodesic ball $B( C(M_0+1) )$ with radius $ C(M_0+1) $ in $\HH^3$. 
	
	By Blaschke Selection Theorem in metric spaces, see for example \cite{gruber2007}, any bounded sequence of convex bodies in $\HH^3$ contains a convergent subsequence with respect to the Hausdorff metric. Hence there exists a convex body $K_{\infty}$ and a sequence $m_n \rightarrow \infty$, such that
	$K_{m_n}$ tends to $K_{\infty}$ with respect to Hausdorff metric.
	
	Hence $S_0 =\lim_{n \rightarrow \infty} S(\p K_{m_n}) = S(\p K_{\infty}) $, and $M_0 = \lim_{n \rightarrow \infty} M(\p K_{m_n}) = M(\p K_{\infty})  $, that is, $K_{\infty} \in \KK^3(S_0)$, and
	$ M(\p K_{\infty}) = \inf \left\{ M(\p K) : K\in \KK^3(S_0) \right\}. $
	That is, $\p K_{\infty}$ is the minimizer of total mean curvature of convex surfaces with surface area $S_0$.
	
	\subsection{Degenerate minimizer}
	
	\begin{proof}[Proof of Proposition \ref{thm:santalo-conj2}(1)]
		If $\Gamma$ is degenerate, then by convexity, $\Gamma$ is a subset of a totally geodesic submanifold isometric to $\HH^2$ in $\HH^3$, with its two faces counted into surface area, and its edge counted into singular mean curvature.
	
	Denote this convex body in $\HH^2$ by $K_1$. Then we have
	\begin{align*}
		S(\Gamma) = 2 S(K_1), \quad M(\Gamma) = \pi L(\p K_1),
	\end{align*}
	where $S(K_1)$ and $L(\p K_1)$ denote area of $K_1$ and perimeter of $\p K_1$ in $\HH^2$, respectively. Then Santal\'{o}'s problem in this case is equivalent to the isoperimetric problem in $\HH^2$. By \cite{weil1940}, the convex body in $\HH^2$ that minimizes perimeter with fixed area is the geodesic ball in $\HH^2$. Hence $\Gamma$ is the flat double disk in $\HH^3$.
	\end{proof}
	
	\subsection{$C^2$ strictly convex minimizer}
	
	Fix the surface area $S$. If a convex surface $\Gamma$ is the minimizer of total mean curvature, and a piece of $\Gamma$ is $C^2$ and strictly convex, that is, both of its principal curvatures are positive in an open subset $U$ of $\Gamma$, then we may apply the method of Lagrange multipliers to derive the Euler-Lagrange equation of the surface.
	Given any normal variation of the surface $\Gamma$ by a geometric flow, say $\p_t \Gamma_t = f_t \nu_t$, with $\Gamma_0 =\Gamma$ and $f_0=f$, where $f$ is a function on $\Gamma$ compactly supported in $U$ which is a relatively open subset of $\Gamma$, we have
	at time $t=0$, the first variation of total mean curvature and area are given by
	\begin{align*}
		&\p_t M = \int_{U} 2(G+1) f d\mu,
		\\
		&\p_t S = \int_{U} H f d\mu .
	\end{align*}
	One may refer to Section \ref{sec:2}, Subsection \ref{subsec.prelim.2} for details.
	
	Since both principal curvatures of $\Gamma$ are positive in $U$, there exists $\epsilon>0$ such that for any $t$ with $-\epsilon < t < \epsilon$, $\Gamma_{t}$ is still a $C^2$ strictly convex surface in $\HH^3$.
	
	Therefore, as a critical point of the functional $M$, $\Gamma$ satisfies the Euler-Lagrange equation 
	$$2(G+1) = \tilde{\lambda} H \text{ in } U,$$
	for a fixed real number $\tilde{\lambda}$.
	By denoting $\lambda = \tilde{\lambda}/2$, we have
	$$G+1 = \lambda H \text{ in } U.$$
	By convexity, $\lambda$ is positive.
	If the whole surface of the minimizer $\Gamma$ is $C^2$ and strictly convex, the equation above holds globally on $\Gamma$.
	
		\begin{proof}[Proof of Proposition \ref{thm:santalo-conj2}(2)]
			Since the minimizer $\Gamma$ is $C^2$ and strictly convex, we may derive the Euler-Lagrange equation for the entire surface from the arguments above, hence there exists a constant $\lambda>0$, such that
		\begin{equation}\label{e-l-gamma}
			G+1 = \lambda H \text{ in } \Gamma.
		\end{equation}
		Integrating both sides of the equation on $\Gamma$, and applying the Gauss-Bonnet Theorem $\int_{\Gamma} G d\mu = 4\pi + S$ in $\HH^3$, we obtain 
		\begin{equation}\label{S-M-relation1}
			4 \pi + 2S = \lambda M.
		\end{equation}
		Since $\Gamma$ is the minimizer of total mean curvature, we know its total mean curvature is no more than that of a sphere with the same area, that is,
		\begin{align}\label{sphere-compare}
			M \leq M_1(S) := \sqrt{16\pi S + 4S^2},
		\end{align}
		therefore
		$$\lambda = \frac{4\pi + 2S}{M} \geq \frac{4\pi+ 2S}{\sqrt{16\pi S + 4S^2}} = 
		\sqrt{\frac{16\pi^2 + 16\pi S + 4S^2 }{16\pi S + 4S^2} }>1.$$
		
		Hence we may write \eqref{e-l-gamma} as
		$$\det(\kappa-\lambda I)=(\kappa_1-\lambda)(\kappa_2-\lambda) = \lambda^2-1>0.$$
		Therefore, $(\kappa-\lambda I)$ is either positive-definite everywhere or negative-definite everywhere.
		Now we claim: Every point in $\Gamma$ is umbilical. We will first prove $(\kappa-\lambda I)$ is positive-definite everywhere.
		
		Suppose $(\kappa-\lambda I)$ is negative-definite everywhere, that is, $(\lambda I-\kappa)$ is positive-definite everywhere. Then we have for every point in $\Gamma$,
		$$ 2\lambda - H = \operatorname{tr}(\lambda I -\kappa) \geq 2 \sqrt{\det(\lambda I -\kappa)} = 2 \sqrt{\lambda^2-1}. $$
		Integrating over $\Gamma$, we have
		$$ 2\lambda S -M \geq 2 \sqrt{\lambda^2-1}S, $$
		that is,
		\begin{equation}\label{temp3.1}
			M \leq 2 S (\lambda -\sqrt{\lambda^2-1}) = 2S \frac{1}{\lambda +\sqrt{\lambda^2-1}}
		\end{equation}
		
		From \eqref{S-M-relation1} we get $\lambda = {(4\pi +2 S)}/{M}$. Substituting this into \eqref{temp3.1}, we have
		\begin{align*}
			M \leq 2 S \frac{M}{4\pi + 2 S + \sqrt{(4\pi +2 S)^2-M^2}} 
			\leq 2 S \frac{M}{4\pi + 2 S}.
		\end{align*}
		Since $M >0$, we now get 
		$$1 \leq \frac{2 S}{4\pi + 2S},$$
		that is, $4\pi + 2 S \leq 2 S$, which leads to a contradiction.
		Therefore, $(\kappa-\lambda I)$ cannot be negative-definite everywhere.  $(\kappa-\lambda I)$ is positive-definite everywhere. Now we prove $\Gamma$ is umbilical everywhere.
		
		Now we have
		$$ H - 2\lambda = \operatorname{tr}(\kappa-\lambda I ) \geq 2 \sqrt{\det(\kappa-\lambda I )} = 2 \sqrt{\lambda^2-1}. $$
		Integrate over $\Gamma$, we have
		\begin{align}\label{temp3.2}
			M \geq 2 S (\lambda + \sqrt{\lambda^2-1}), 
		\end{align}
		where the equality holds if and only if $\Gamma$ is umbilical everywhere.
		Again from \eqref{S-M-relation1} we get $\lambda = {(4\pi +2 S)}/{M}$. Substituting this into \eqref{temp3.2}, we get
		\begin{align*}
			M \geq 
			&2 S \left( \frac{4\pi+2 S}{M} + \sqrt{\left( \frac{4\pi+2 S}{M} \right)^2-1} \right) 
			\\
			= &2 S \left( \frac{4\pi+2 S}{M} + \frac{1}{M}\sqrt{\left( {4\pi+2 S} \right)^2-M^2} \right)
		\end{align*}
		Now using \eqref{sphere-compare} again, we have
		\begin{align*}
			M \geq& 2 S \left( \frac{4\pi+2 S}{M} + \frac{1}{M}\sqrt{\left( {4\pi+2 S} \right)^2-(16\pi  S+ 4S^2)} \right)
			\\
			=&2 S  \frac{1}{M} \left( 4\pi+2 S + 4\pi \right),
		\end{align*}
		that is,
		\begin{align*}
			M^2 \geq 16\pi S + 4S^2,
		\end{align*}
		that is,
		\begin{align}\label{sphere-compare2}
			M \geq M_1(S).
		\end{align}
		Combining \eqref{sphere-compare2} and \eqref{sphere-compare}, we obtain $M = M_1(S)$ and the equality in \eqref{temp3.2} holds, hence $\Gamma$ is umbilical everywhere. That is, $\kappa_1 = \kappa_2 = \lambda >1$ everywhere. Thus $\Gamma$ is a sphere in $\HH^3$.
		\end{proof}

	\section{Proof of Theorem \ref{thm:counterexample}: A new family of convex surfaces}\label{sec:3}
	
	In this subsection, we will construct a two-variable family of convex surfaces $\Gamma(r,\alpha)$ that forms a counterexample to \eqref{eq:santalo-conj2} and thereby proves Theorem \ref{thm:counterexample}.
	We construct $\Gamma(r,\al)$ as follows. Take a sphere $\mathbb{S}(r)$ in $\HH^3$ with radius $r$, and truncate it by two parallel totally geodesic planes, both of which are isometric to $ \HH^2 $. More precisely, $\Gamma(r,\alpha) := \p \Omega(r,\alpha)$, where $\Omega(r,\alpha)$ is the following convex body in $\HH^3$ defined via the Beltrami-Klein model (the origin can be taken arbitrarily in advance, and then fixed):
	\begin{align*}
		&
		\Omega(r,\alpha): = 
		\\
		&\operatorname{Id}^{-1}\left( (x_1,x_2,x_3) \in B_{\RR^3}(1): 
		(x_1)^2 + (x_2)^2 + (x_3)^2 \leq \hat{r}^2, 
		-\hat{r}\cos(\alpha) \leq x_3 \leq \hat{r}\cos(\alpha)
		 \right),
	\end{align*}
	where the identity map is defined in \eqref{defn.BK.identity}, and $\hat{r} = \tanh(r)$ is the Euclidean radius of the image under $\operatorname{Id}$ of the sphere $\mathbb{S}(r)$ in $\HH^3$ with radius $r$. See also Figure \ref{fig:fig2}.
	
	\begin{figure}[h]
		\centering
		\includegraphics[width=0.5\textwidth]{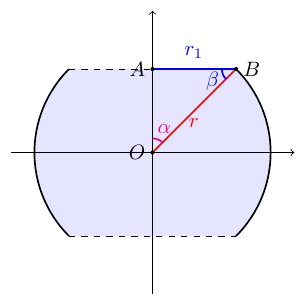}
		\caption{The section of the set $\Omega(r,\alpha)$ is filled blue in the Beltrami-Klein model. Note that the distances and angles are measured in the hyperbolic metric, hence the shape is distorted.}
		\label{fig:fig2}
	\end{figure}
	
	We denote the following three points by $O := \operatorname{Id}^{-1}(0,0,0)$, $A := \operatorname{Id}^{-1}(0,0,\hat{r}\cos(\alpha))$ and $B:= \operatorname{Id}^{-1}(\hat{r}\sin(\alpha),0,\hat{r}\cos(\alpha))$.
	In the hyperbolic right triangle $OAB$ (as shown in Figure \ref{fig:fig2}), we will introduce the following notation. Clearly by definition of $\Omega(r,\alpha)$, we have the hypotenuse $\operatorname{dist}_{\HH^3}(O,B) = r = \operatorname{arctanh}(\hat{r})$, and the angle $ \angle AOB = \alpha$. By the hyperbolic law of sines, $r_1: = \operatorname{dist}_{\HH^3}(A,B) $ is given by $ \sinh(r_1) = \sinh(r) \sin(\al) $. By the hyperbolic law of cosines, the angle $\beta: = \angle ABO$ is given by $\cot(\beta) \cot(\alpha) = \cosh(r)$. 
	
	We decompose the surface $\Gamma(r,\alpha)$ into three pairwise disjoint subsets. Namely $\Gamma(r,\alpha) = \Gamma_1(r,\alpha)\cup \Gamma_2(r,\alpha)\cup \Gamma_3(r,\alpha) $, where 
	\begin{align*}
		&\Gamma_1(r,\alpha) : = 
		\\
		&\operatorname{Id}^{-1}\left\{
		 (x_1,x_2,x_3) \in B_{\RR^3}(1): 
	0\leq (x_1)^2 + (x_2)^2 \leq \hat{r}^2, 
	\right.
	\\
	&
	\left.
	x_3 = \hat{r}\cos(\alpha) \text{ or } -\hat{r}\cos(\alpha) 
	\right\}
	\end{align*}
	denotes the upper and lower "flat" parts, which are isometric to two identical disjoint disks embedded in $\HH^2$ with radius $r_1$;
	\begin{align*}
		&\Gamma_2(r,\alpha) 
		: =
		\\
		& \operatorname{Id}^{-1}\left\{ 
		(x_1,x_2,x_3) \in B_{\RR^3}(1): 
		(x_1)^2 + (x_2)^2 +(x_3)^2 = \hat{r}^2, \right.
		\\
		&
		\left.
		-\hat{r}\cos(\alpha) < x_3 < \hat{r}\cos(\alpha) 
		\right\}
	\end{align*}
	denotes the "spherical" part, which is isometric to a subset of the geodesic sphere $\SSS^2(r)$ with radius $r$ in $\HH^3$. The solid angle of the spherical part $\Gamma_2(r,\alpha)$ in $\SSS^2(r)$ is $4\pi -2\cdot 2\pi (1-\cos(\alpha)) = 4\pi \cos(\alpha)$, hence its area proportion of the whole sphere $\SSS^2(r)$ is $\cos(\alpha)$; 
	\begin{align*}
		&\Gamma_3(r,\alpha) : =
		\\
		& \operatorname{Id}^{-1}\left\{ (x_1,x_2,x_3) \in B_{\RR^3}(1): 
		(x_1)^2 + (x_2)^2 + (x_3)^2 = \hat{r}^2, 
		\right.
		\\
		&
		\left.
		x_3 = \hat{r}\cos(\alpha) \text{ or } -\hat{r}\cos(\alpha) 
		\right\}
	\end{align*} 
	denotes the two "edges" where $ \Gamma_1(r,\alpha)$ and $\Gamma_2(r,\alpha)$ meet.

	The surface area of $\Gamma(r,\alpha)$ is the sum of that of $\Gamma_1(r,\alpha)$ and $\Gamma_2(r,\alpha)$. The area of $\Gamma_2(r,\alpha)$ is the corresponding proportion of area of the sphere in $\HH^3$ with radius $r$, hence $S(\Gamma_2(r,\alpha)) = 4\pi \sinh^2(r) \cos(\alpha) $. That is,
	\begin{align*}
		S= S(\Gamma(r,\al)) &= 
		4\pi \sinh^2(r) \cos(\alpha)
		+ 4\pi (\cosh(r_1)-1).
	\end{align*}
	
	To compute 
	the total mean curvature of $\Gamma(r,\alpha)$, following the Steiner formula in hyperbolic space \cite{kohlmann1991}, we will use the notion of mean curvature measure, see Appendix \ref{append.mean.curv.compute}. To be specific, 
	\begin{align*}
		&M(\Gamma(r,\alpha)) = 
		\\
		&
		\Phi_1(\Omega(r,\alpha), \Gamma_1(r,\alpha))
		+
		\Phi_1(\Omega(r,\alpha), \Gamma_2(r,\alpha))
		+
		\Phi_1(\Omega(r,\alpha), \Gamma_3(r,\alpha)),
	\end{align*}
	where 
	\begin{itemize}
		\item $\Phi_1(\Omega(r,\alpha), \Gamma_1(r,\alpha))$ denotes the total mean curvature on $\Gamma_1(r,\alpha)$, that is, the "flat" part, hence $\Phi_1(\Omega(r,\alpha), \Gamma_1(r,\alpha)) = 0$ because both components of $\Gamma_1(r,\alpha)$ lie in two totally geodesic planes of $\HH^3$ respectively; 
	\item $\Phi_1(\Omega(r,\alpha), \Gamma_2(r,\alpha))$ denotes the total mean curvature on $\Gamma_2(r,\alpha)$, that is, the "spherical" part, and hence equals the corresponding proportion of the total mean curvature of a sphere in $\HH^3$ with radius $r$, so $$\Phi_1(\Omega(r,\alpha), \Gamma_2(r,\alpha)) = 8\pi \sinh(r) \cosh(r) \cos(\alpha); $$ 
	\item $\Phi_1(\Omega(r,\alpha), \Gamma_3(r,\alpha))$ denotes
	the total mean curvature on the two "edge" components, and equals $$\Phi_1(\Omega(r,\alpha), \Gamma_3(r,\alpha)) =   2\left( \frac{\pi}{2} -\beta \right) L_1 = 4\pi \left( \frac{\pi}{2} -\beta \right)  \sinh(r_1), $$ where $\left( \frac{\pi}{2} -\beta \right)$ is the angle between outer normal vectors of $\Gamma_1(r,\alpha)$ and $\Gamma_2(r,\alpha)$ at any point of $\Gamma_3(r,\alpha)$, and $L_1$ is the length ($1$-dimensional Hausdorff measure $\HHcal^1$) of any component of $\Gamma_3(r,\alpha)$, given by the perimeter of a geodesic ball in $\HH^2$. We will give a detailed computation of $\Phi_1(\Omega(r,\alpha), \Gamma_3(r,\alpha))$ in Appendix \eqref{append.mean.curv.compute}.
	\end{itemize}
	Hence the total mean curvature of $\Gamma(r,\al)$ is:
	\begin{align*}
	M= M(\Gamma(r,\al)) &= 
	8\pi \sinh(r) \cosh(r) \cos(\alpha) 
	+ 4\pi \sinh(r_1) \left( \frac{\pi}{2} -\beta \right) .
	\end{align*}
	
	Rewriting the terms involving $r_1$ and $\alpha$ in terms of $ r $ and $ \alpha $,
	we have
	\begin{align*}
		 4\pi \sinh(r_1) \left( \frac{\pi}{2} -\beta \right) =& 
		4\pi \sinh(r) \sin(\al) \arctan\left( \cosh(r) \tan(\al) \right),\\
		 4\pi (\cosh(r_1)-1) =&
		4\pi \left( \sqrt{1+\sinh^2(r) \sin^2(\al)}-1 \right).
	\end{align*}
	Thus
	\begin{align}
		\label{eq1.counterexample}
		M = M(\Gamma(r,\alpha)) =& 8\pi \sinh(r) \cosh(r) \cos(\alpha) 
		\\
		\nonumber
		&+ 4\pi \sinh(r) \sin(\al) \arctan\left( \cosh(r) \tan(\alpha) \right),
		\\
		\label{eq2.counterexample}
		S = S(\Gamma(r,\alpha)) =& 4\pi \sinh^2(r) \cos(\alpha) 
		+ 4\pi \left( \sqrt{1+\sinh^2(r) \sin^2(\alpha)}-1 \right).
	\end{align}
	Taking $r = r_0:= 0.734735$ and $\alpha = \alpha_0:= 0.759454$, by \eqref{eq1.counterexample} and \eqref{eq2.counterexample} we compute $S = S(\Gamma(r,\alpha)) \approx 7.662491$, and $M =M(\Gamma(r,\alpha)) \approx 24.892290 $ while $ M_1(S(\Gamma(r,\alpha))) \approx 24.900077$ and $ M_2(S(\Gamma(r,\alpha))) \approx 24.900623 $. Hence for $r=r_0$ and $\alpha=\alpha_0$ we have
	\begin{align}\label{conterexample.final.compare}
		M(\Gamma(r,\alpha)) < \min\left\{M_1(S(\Gamma(r,\alpha))), M_2(S(\Gamma(r,\alpha)))\right\}
	\end{align}
	Since $M(\Gamma(r,\alpha))$, $ M_1(S(\Gamma(r,\alpha))) $ and $ M_2(S(\Gamma(r,\alpha)))$ are all smooth functions of $r$ and $\alpha$ for $r>0$ and $0 < \alpha < \frac{\pi}{2}$, we have \eqref{conterexample.final.compare} also holds for all $(r,\alpha)$ in a neighborhood of $(r_0,\alpha_0)$. Also note that $S(\Gamma(r,\alpha))$ is strictly increasing with respect to $r$, hence Theorem \ref{thm:counterexample} follows.
	
	\begin{note}
		
		Although the "drum" shape $\Gamma(r,\alpha)$ constructed above gives a contradiction to \eqref{eq:santalo-conj2} for a certain range of parameter values of $r$ and $\alpha$, this two-variable family of convex surfaces $\Gamma(r,\alpha)$ does not contradict \eqref{eq:santalo-conj2} when either the area $S$ is close to $0$ or the area $S$ is large enough.
		
		In fact, $\Gamma(r,\alpha)$ only contradicts \eqref{eq:santalo-conj2} and serves as a candidate for a minimizer when the area is very close to $\hat{S}= \frac{8\pi(\pi^2-8)}{16-\pi^2}\approx 7.665$, which is the positive solution to $M_1(S) = M_2(S)$. See Figure \ref{fig:fig3.combined} for samples of $\Gamma(r,\alpha)$.

		\begin{figure}[h]
			\centering
			
			\begin{subfigure}[t]{0.48\textwidth}
				\centering
				\includegraphics[width=\linewidth]{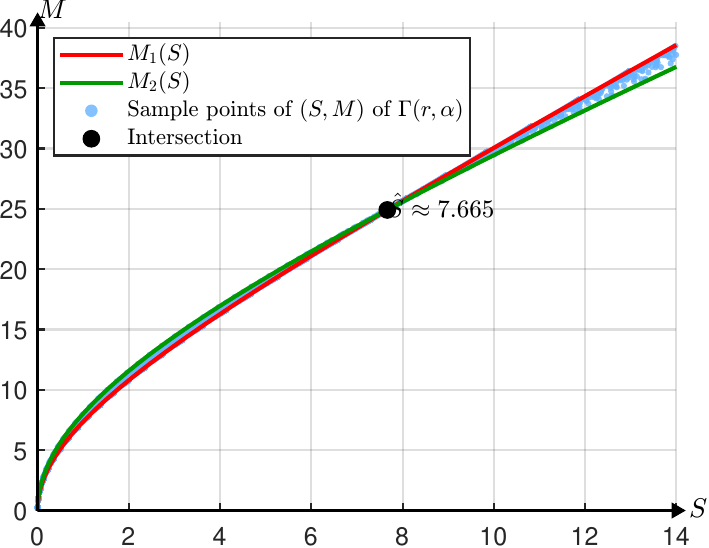}
				\caption{Range for $0< S <14$}
				\label{fig:fig3}
			\end{subfigure}
			\hfill
			\begin{subfigure}[t]{0.48\textwidth}
				\centering
				\includegraphics[width=\linewidth]{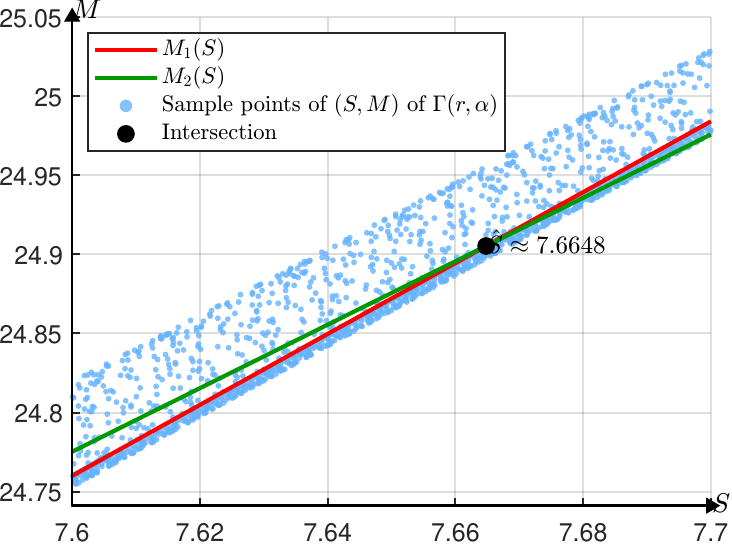}
				\caption{Zoom near $S\approx \hat S \approx 7.665$}
				\label{fig:fig3_zoom}
			\end{subfigure}
			
			\caption{Comparison of $M_1(S)$ and $M_2(S)$ with sampled $(S,M)$ points of $\Gamma(r,\alpha)$.}
			\label{fig:fig3.combined}
		\end{figure}
	\end{note}

	
	\section{Proof of Theorem \ref{thm:santalo-sing}: Singularity property}\label{sec:4}
	In this section, we will prove Theorem \ref{thm:santalo-sing}.
	One may refer to Section \ref{sec:1} for definitions and facts about singular points, and integral geometric formulas for quermassintegrals.
	
	We will use a local argument. The idea is that if there exists a $0$-singular point on the minimizer surface $\Gamma$, then it forms a "vertex" of the whole convex body $\Omega = \operatorname{conv}(\Gamma)$. Hence we can shrink $\Omega$ infinitesimally by truncating it with a family of planes at the "vertex". After the infinitesimal shrinking, we will then perform an infinitesimal expansion using outward geodesic normal flow (outer parallel surface) to restore the area. Finally, we construct a convex surface $\Gamma'$ with the same area as $\Gamma$, and we will prove $\Gamma'$ has strictly less total mean curvature than $\Gamma$, which contradicts the assumption that $\Gamma$ is a minimizer. In the whole proof, we will use the Beltrami-Klein model to describe the infinitesimal shrinking in Euclidean space.
	
	\begin{proof}[Proof of Theorem \ref{thm:santalo-sing}]
		Let $\Gamma$ be a minimizer to Santal\'{o}'s problem. Let $\Omega = \operatorname{conv}(\Gamma)$.
		
		If $\Gamma$ is degenerate, then by Proposition \ref{thm:santalo-conj2}(1), it is a flat double disk, whose two open disks are regular points, and the points along the circular edge are $1$-singular points. Hence the theorem follows. In what follows, we will assume $\Gamma$ is non-degenerate, that is, its convex hull has non-empty interior. 
	
	Suppose that there exists a $0$-singular point $p$ on $\Gamma$. Then the normal cone $N(p, \Omega)$ has at least three linearly independent unit vectors. We choose $P_1,P_2,P_3\in \operatorname{Supp}(p,\Omega)$ to be three linearly independent supporting planes of $\Omega$ at $p$. Let $U_1$ be the closed half-space of $\HH^3$ divided by $P_1$ that contains $\Omega$, and define $U_2$, $U_3$ similarly for $P_2$, $P_3$, respectively. Since $P_1$, $P_2$ and $P_3$ are linearly independent, $\Omega_1 := U_1\cap U_2\cap U_3$ is a closed unbounded triangular pyramid, and $\Omega \subset \Omega_1$. We denote $\Gamma_{1}:= \p \Omega_{1}$.
	
	
	For any unit normal vector $\nu\in \operatorname{relint} {\NNN(p,\Omega)}$ (where $\operatorname{relint}\NNN(p,\Omega)$ means the relative interior of $\NNN(p,\Omega)$ in its affine hull in $\{ \nu \in T_p\HH^3: \|\nu\| = 1 \}$,  as stated in Section \ref{sec:1}, Subsection \ref{subsec.prelim.4}), we define $p(\nu,d)$ to be the point $\exp_{p}(-d\nu)$, and define $\nu(d)$ to be the image of $\nu$ parallel transported from $T_p \HH^3$ to $T_{p(\nu,d)} \HH^3$ for $d>0$, and $P(\nu,d)$ to be the parallel-transported hyperbolic plane. That is,
	\begin{align*}
		P(\nu,d) :=  \left\{ \exp_{p(\nu,d)}(v) : v\in T_{p(\nu,d)}\HH^3, \bar{g}\left(v,\nu(d)\right) = 0 \text{ in } T_{p(\nu,d)}\HH^3 \right\}.
	\end{align*}
	Clearly the distance from $p$ to $P(\nu,d)$ is $d$ in $\HH^3$.
	
	For any unit normal vector $\nu$ in $\operatorname{relint} {\NNN(p,\Omega)}$, and for $d>0$ small enough, $\Omega$ and $\Omega_1$ will both be divided by $P(\nu, d)$  into two convex subsets. The proof will focus on the infinitesimal change of surface area and total mean curvature of the truncated convex surface. The specific choice of $\nu\in \operatorname{relint} {\NNN(p,\Omega)}$ used to perform this cut is to be determined later, and we will set up the notations and lemmas first.
	
	We denote the two closed half-spaces of $\HH^3$ divided by $P(\nu, d)$ to be $P^{+}(\nu,d)$ and $P^{-}(\nu, d)$, where $P^{+}(\nu,d)$ is the one containing $p$, and $P^{-}(\nu, d)$ is the other half-space. We denote $\Omega_{1}^{\pm}(\nu,d) : = \Omega_{1}\cap P^{\pm}(\nu,d)$, and $\Omega^{\pm}(\nu,d) :=  \Omega\cap P^{\pm}(\nu,d)$. Note that all of them are closed convex sets, while $\Omega_{1}^{+}$, $\Omega^{+}$, and $\Omega^{-}$ are bounded, and $\Omega_{1}^{-}$ is unbounded. For the corresponding boundary surfaces, we denote $\Gamma_{1}^{\pm}(\nu,d):= \p \Omega_{1}^{\pm}(\nu,d)$, and $\Gamma^{\pm}(\nu,d):= \p \Omega^{\pm}(\nu,d)$. 
	
	Let $Q_1(\nu,d):= \Omega_{1}\cap P(\nu,d)$ and $Q(\nu,d):= \Omega\cap P(\nu,d)$. Each of $Q_1(\nu,d)$ and $Q(\nu,d)$ is isometric to a subset of $\HH^2$. Then we have $ \Gamma_{1}^{+}(\nu,d) = \left( \Gamma_{1} \cap P^{+}(\nu,d) \right)\cup Q_{1}(\nu,d)$ and $ \Gamma^{+}(\nu,d) = \left( \Gamma \cap P^{+}(\nu,d) \right)\cup Q(\nu,d)$.
	
	
	
	\begin{lem}\label{lem.diff.area}
		For any $\nu\in \operatorname{relint} {\NNN(p,\Omega)}$, there exists constant $C$ depending only on $\Omega, p, \nu$,  such that for $d>0$ small enough, 
		\begin{align*}
			S(\Gamma) - S(\Gamma^{-}(\nu,d)) \leq C d^2 .
		\end{align*}
	\end{lem}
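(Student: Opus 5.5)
The plan is to compare $\Gamma^{-}(\nu,d)$ with $\Gamma$ directly. Since $\Omega^{-}(\nu,d)=\Omega\cap P^{-}(\nu,d)$, its boundary decomposes as
\[
\Gamma^{-}(\nu,d)=\bigl(\Gamma\cap P^{-}(\nu,d)\bigr)\cup Q(\nu,d).
\]
Hence
\[
S(\Gamma)-S(\Gamma^{-}(\nu,d)) = S\bigl(\Gamma\cap \operatorname{int}P^{+}(\nu,d)\bigr)-S(Q(\nu,d)) \le S\bigl(\Gamma\cap P^{+}(\nu,d)\bigr).
\]
It therefore suffices to show that the cap $\Gamma\cap P^{+}(\nu,d)$ has area $O(d^2)$. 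This cap is contained in the boundary of $\Omega^{+}(\nu,d)$, which is a convex body contained in $\Omega_1^{+}(\nu,d)$.

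Next I use the standard monotonicity of surface area under inclusion of convex bodies. In $\HH^3$ this follows, for instance, from the nearest-point projection onto a convex set being $1$-Lipschitz, or from Crofton-type formulas for $W_0$. It gives
\[
S\bigl(\Gamma\cap P^{+}(\nu,d)\bigr)\le S(\Gamma^{+}(\nu,d))\le S(\Gamma_1^{+}(\nu,d)).
\]
The problem then reduces to estimating the area of the boundary of the small truncated pyramid $\Omega_1^{+}(\nu,d)=U_1\cap U_2\cap U_3\cap P^{+}(\nu,d)$.

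The key geometric point is that $\nu\in\operatorname{relint}\NNN(p,\Omega)$. In fact $\nu$ should lie in the interior of the cone spanned by the normals of $P_1,P_2,P_3$. If necessary, I will choose $P_1,P_2,P_3$ so that $\nu$ lies in the interior of their positive span, which is possible since $\dim N(p,\Omega)=3$. Under this condition the plane $P(\nu,d)$ is transverse to the tangent cone of $\Omega_1$ at $p$, and $\Omega_1^{+}(\nu,d)$ is a compact tetrahedron with apex $p$.

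The tetrahedron's diameter is at most $c\,d$, where $c$ depends only on the angles between $\nu$ and the faces, that is, on $\Omega,p,\nu$. To see this, I will work in the Beltrami--Klein model centered at $p$. There the picture is exactly Euclidean: a cone cut by a plane at Euclidean distance $\tanh d$ from the apex gives a tetrahedron that scales linearly in $\tanh d$. The metric $g^{BK}$ is comparable to the Euclidean metric with constants close to $1$ on a small ball, so hyperbolic areas are bounded by a fixed multiple of Euclidean ones. The four faces are triangles of Euclidean area $O(\tanh^2 d)$, giving $S(\Gamma_1^{+}(\nu,d))\le C d^2$ for $d$ small.

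The main obstacle is the geometric step: making sure the cut is transverse, so that $\Omega_1^{+}$ is bounded with diameter linear in $d$. This requires $\nu$ to lie strictly inside the dual cone generated by the chosen $P_i$, and it is the only place where the hypothesis $\nu\in\operatorname{relint}\NNN(p,\Omega)$ and three-dimensionality of the normal cone are used. In the Euclidean picture it amounts to $\langle x,\nu\rangle\ge \delta|x|$ on the tangent cone for some $\delta>0$, which yields diameter $\le \tanh d/\delta$.
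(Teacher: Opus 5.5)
Your proposal is correct and follows essentially the paper's route: you bound the loss by $S(\Gamma\cap P^{+}(\nu,d))\le S(\Gamma^{+}(\nu,d))\le S(\Gamma_1^{+}(\nu,d))$ using monotonicity of area for nested convex bodies (the paper cites Ghomi--Spruck's comparison lemma for this), and then note that the small tetrahedron $\Omega_1^{+}(\nu,d)$ has diameter $O(d)$ and hence boundary area $O(d^2)$. Your version is slightly more careful than the paper's in two respects: you write the first step as an inequality, whereas the paper's equality omits the $-S(Q(\nu,d))$ term; and you explicitly choose $P_1,P_2,P_3$ so that $\nu$ lies in the interior of their positive span, which is what boundedness of $\Omega_1^{+}(\nu,d)$ actually requires (with outer normals the cone estimate should read $\langle x,\nu\rangle\le-\delta|x|$, a harmless sign slip).
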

	\begin{proof}[Proof of Lemma \ref{lem.diff.area}]
		By definition, $ S(\Gamma) - S(\Gamma^{-}(\nu,d)) = S(\Gamma\cap P^{+}(\nu,d) ) $. Since $ \Gamma^{+}(\nu,d) = \left( \Gamma \cap P^{+}(\nu,d) \right)\cup Q(\nu,d)$, it suffices to show $S(\Gamma^{+}(\nu,d)) = \mathcal{O}(d^2)$ as $d \to 0^{+}$. 
		By comparison theorem for areas of nested convex surfaces \cite[Lemma 3.4]{ghomi-spruck2023}, since $ \Omega^{+}(\nu,d) \subset \Omega_{1}^{+}(\nu,d) $, we have $S(\Gamma^{+}(\nu,d)) \leq S(\Gamma_{1}^{+}(\nu,d))$. Hence it suffices to show $S(\Gamma_{1}^{+}(\nu,d)) = \mathcal{O}(d^2)$ as $d \to 0^{+}$.
		
		$\Omega_{1}$ is a triangular pyramid in $\HH^3$, thus $\Omega_{1}^{+}(\nu,d)$ is a tetrahedron in $\HH^3$ with diameter $\operatorname{diam}( \Omega_{1}^{+}(\nu,d) ) \leq C d$ for some constant $C$ depending only on $N(p,K)$, for $d>0$ small enough. Hence by classical area formulas of hyperbolic triangles, $S(\Gamma_{1}^{+}(\nu,d)) \leq C d^2 $  for some constant  $C$  depending only on  $\Omega, p, \nu$.
	\end{proof}
	
	\begin{lem}\label{lem.diff.total.mean.curv}
		For any $\nu\in \operatorname{relint} {\NNN(p,\Omega)}$, there exists constant $c$ depending only on $\Omega, p, \nu$,  such that for $d>0$ small enough,
		\begin{align}\label{temp.sing.hyperbolic1}
			M(\Gamma) - M(\Gamma^{-}(\nu,d)) > cd.
		\end{align}
	\end{lem}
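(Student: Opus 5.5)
The plan is to use the integral-geometric formula \eqref{quermass.formula.hyperbolic}. Since $\Omega^{-}(\nu,d)\subset\Omega$, every plane meeting $\Omega^{-}(\nu,d)$ also meets $\Omega$. Hence
\begin{align*}
M(\Gamma)-M(\Gamma^{-}(\nu,d))=\int_{\mathcal{L}_2(\HH^3)}\left(\chi(L_2\cap\Omega)-\chi(L_2\cap\Omega^{-}(\nu,d))\right)dL_2(\HH^3),
\end{align*}
and the integrand is the indicator of the set $\mathcal{A}(d)$ of planes that meet $\Omega$ but miss $\Omega^{-}(\nu,d)$. Since $\Omega$ is convex and $\Omega^{-}=\Omega\cap P^{-}$, such a plane meets $\Omega$ only inside the open half-space $P^{+}(\nu,d)\setminus P(\nu,d)$, that is, it meets $\Omega^{+}(\nu,d)$ but not $Q(\nu,d)$. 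It therefore suffices to show that the invariant measure of $\mathcal{A}(d)$ is at least $cd$.

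To exhibit many such planes, we parametrize planes near $p$ as in \eqref{grassmannian.paramter.H}, placing the origin of the Beltrami--Klein model at $p$, so that $p$ corresponds to $0\in\RR^3$. Because $p$ is $0$-singular, $\NNN(p,\Omega)$ has nonempty interior in the unit sphere, and we may take $\nu$ in it. Choose a small open cap $V\subset\operatorname{relint}\NNN(p,\Omega)$ of unit vectors $\mu$ around $\nu$, with $\overline V$ compact in the relative interior, so that $\langle\mu,\nu\rangle\ge\cos\theta_0>0$ on $V$.

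For $\mu\in V$ and $t>0$, consider the plane $\operatorname{Plane}(-\mu,t)$, which is parallel to a supporting plane at $p$ but pushed inward by distance $t$. Any such plane with $0<t<\delta(\mu,d)$ separates $p$ from $Q(\nu,d)$. It meets $\Omega$ because $p\in\Omega$ lies on one side while $Q(\nu,d)$, which is nonempty, lies on the other. It misses $Q(\nu,d)$ as soon as $t<\min_{x\in Q(\nu,d)}\operatorname{dist}$ measured along $\mu$.

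The key estimate is that this minimum is bounded below by $c_1 d$ uniformly in $\mu\in V$. Working in Euclidean terms in the Klein picture at scale $d\to0$, where the metric is comparable to the Euclidean one near $0$, we have $Q(\nu,d)\subset Q_1(\nu,d)$. Because $\Omega\subset\Omega_1$ and $\Omega_1$ is a cone at $p$ whose dual cone has interior containing $\overline V$, every point $x\in\Omega_1$ satisfies $\langle x,-\mu\rangle\ge c_2|x|$ for $\mu\in\overline V$. Points of $Q(\nu,d)$ also satisfy $|x|\ge\langle x,-\nu\rangle\approx d$. Together these give $\langle x,-\mu\rangle\ge c_1 d$.

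It follows that $\mathcal{A}(d)\supset\{\operatorname{Plane}(-\mu,t):\mu\in V,\ 0<t<c_1d\}$. By \eqref{invariant.measure.HH3}, its measure is at least $2\,\mu_{\SSS^2}(V)\,c_1d$, since $1+2\sinh^2\ge1$, so any $c<2c_1\mu_{\SSS^2}(V)$ works, for instance $c=c_1\mu_{\SSS^2}(V)$.

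The main obstacle is this uniform lower bound. It genuinely uses the $3$-dimensionality of the normal cone, which guarantees an open set of inward directions $\mu$ strictly inside the dual cone and hence the strict inequality $\langle x,-\mu\rangle\ge c_2|x|$ on $\Omega_1$. The bound must be carried out carefully in the hyperbolic metric, either via the Klein model comparison at small scale or via hyperbolic right-triangle formulas. If a $1$-singular point were used instead, $V$ would have empty interior in $\SSS^2$ and the measure would be of order $d^2$, which explains why the argument is specific to $0$-singular points.
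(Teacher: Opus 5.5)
Your proposal is correct and follows essentially the paper's route. The integral-geometric formula reduces the claim to exhibiting the open family of planes $\operatorname{Plane}(-\mu,t)$, with $\mu$ near $\nu$ and $0<t<c_1 d$, that meet $\Omega$ but miss $\Omega^{-}(\nu,d)$; your uniform cone bound $\langle x,-\mu\rangle\ge c_2|x|$ on $\Omega_1$ is exactly the content of Lemma~\ref{lemma.sing.pyramid}. As in the paper, this needs $\nu$ in the interior of the normal cone of $\Omega_1$, not just of $\Omega$, which you can arrange by choosing $P_1,P_2,P_3$ after $\nu$.

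The differences are cosmetic. You estimate directly in the hyperbolic parametrization \eqref{grassmannian.paramter.H}, whereas the paper first proves the Euclidean bound \eqref{temp.sing.euclidean1} and then transfers it. By using \eqref{quermass.formula.hyperbolic} as printed, you also drop the term $2V(\Omega^{+}(\nu,d))$ that the paper's proof carries in \eqref{sing.M.difference.hyperbolic} (from $M=W_1+2V$). This is harmless, since that term is nonnegative.
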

	
	\begin{proof}[Proof of Lemma \ref{lem.diff.total.mean.curv}]
		By \eqref{quermass.formula.hyperbolic}, 
	\begin{align}\label{sing.M.difference.hyperbolic}
		M(\Gamma) - M(\Gamma^{-}(\nu,d)) = \int_{\mathcal{L}} d L_2(\HH^3) + 2 V(\Omega^{+}(\nu,d)),
	\end{align}
	where $\mathcal{L}$ denotes the set of all the totally geodesic planes in $\HH^3$ that intersect $\Omega$ but do not intersect $\Omega^{-}(\nu,d)$,  and $d L_2(\HH^3)$ is the natural (invariant) measure on $\mathcal{L}_2(\HH^3)$. See Section \ref{sec:1}, Subsection \ref{subsec.prelim.3}.
	
	For convenience, we will first prove a similar result in Euclidean space $\RR^3$. The proof in $\HH^3$ will then follow. That is, we will prove
	\begin{align}\label{temp.sing.euclidean1}
		M(\operatorname{Id}(\Gamma)) - M(\operatorname{Id}(\Gamma^{-}(\nu,d))) > c d. 
	\end{align}
	for some constant $c >0$ depending only on $\Omega$, $p$ and $\nu$, where $\operatorname{Id}$ is the Beltrami-Klein identity map as defined in \eqref{defn.BK.identity}.
	By \eqref{quermass.formula.euclidean},
	\begin{align}\label{temp.sing.euclidean2}
		M(\operatorname{Id}(\Gamma)) - M(\operatorname{Id}(\Gamma^{-}(\nu,d))) = \int_{\widetilde{\mathcal{L}}} d L_2(\RR^3) ,
	\end{align}
	where $\widetilde{\mathcal{L}} = \operatorname{Id}(\mathcal{L})$ is the set of all the totally geodesic planes in $\RR^3$ that intersect $\operatorname{Id}(\Omega)$ but do not intersect $\operatorname{Id}(\Omega^{-}(\nu,d))$, and $d L_2(\RR^3)$ is the natural (invariant) measure on $\mathcal{L}_2(\RR^3)$. 
	
	Take the origin $O$ to be $\operatorname{Id}(p)\in B_{\RR^3}(1) \subset \RR^3$, and take the coordinates such that $\operatorname{Id}_{\star}(\nu) = (0,0,1)$ in $T_{\operatorname{Id}(p)}\RR^3$, then by \eqref{invariant.measure.RR3},
	\begin{align}\label{sing.invariant.measure}
		 dL_2(\RR^3) = 2 dR \wedge d\mu_{\SSS^2}.
	\end{align} 
	and $\operatorname{Id}(\Omega) \subset \RR^3_{-} := \{ (x,y,z)\in \RR^3: z <0 \}$.
	
	By  \eqref{temp.sing.euclidean2} and \eqref{sing.invariant.measure}, to prove  \eqref{temp.sing.euclidean1}, 
	it suffices to show an open subset of $dL_2(\RR^3)$-measure $c_1 d$ is contained in $\widetilde{\mathcal{L}}$. That is, there exist constants $c_1>0$ and $\delta>0$ depending only on $\Omega$, $p$ and $\nu$, such that 
	 \begin{align}\label{temp.sing.measure.positive}
	 	\operatorname{Plane} \left( B_{\SSS^2}(-\operatorname{Id}_{\star}\nu,\delta)\times(0, c_1 d) \right)
	 	=
	 	\operatorname{Plane} \left( B_{\SSS^2}((0,0,-1),\delta)\times(0, c_1 d) \right) \subset \widetilde{\mathcal{L}}, 
	 \end{align}
	where $B_{\SSS^2}((0,0,-1),\delta) $ denotes the open geodesic ball in $\SSS^2$ centered at $(0,0,-1)$ with radius $\delta$, and $\operatorname{Plane}$ is the parametrization of $\mathcal{L}_2(\RR^3)$ as defined in \eqref{grassmannian.paramter.R}.
	
	To proceed, we prove the following technical lemma concerning triangular pyramids.
	\begin{lem}\label{lemma.sing.pyramid}
		Let $K$ be a closed unbounded triangular pyramid (intersection of three linearly independent half-spaces) in $\RR^3$ with apex at the origin $O$. For any $\nu \in \operatorname{relint} \NNN(p,K)$ and any $d>0$, there exists $\delta >0$, depending only on $K$ and $\nu$, such that for all $\nu_1 \in B_{\SSS^2}(\nu, \delta)$, we have
		$$\operatorname{dist}_{\RR^3}(\operatorname{Plane}(\nu_1,0), K\cap \widetilde{U}(d)) > \frac{1}{2}d,$$
		where $\widetilde{U}(d): = \left\{ q \in \RR^3: \left\langle q, \nu \right\rangle \leq -d \right\} $. 
	\end{lem}
	\begin{proof}[Proof of Lemma \ref{lemma.sing.pyramid}]
		We take the unit outer normals to the three faces of $K$ and denote them by $\nu_1,\nu_2,\nu_3$ respectively. Then 
		$
			K = \left\{ q \in \RR^3: \left\langle q, \nu_{i} \right\rangle \leq 0 , \text{ for all } i \in {1,2,3} \right\},
		$
		and 
		\begin{align*}
			N(p,K) = \left\{ t_1\nu_1 + t_2\nu_2 + t_3\nu_3: t_{i} \geq 0, \text{ for all } i \in {1,2,3} \right\}.
		\end{align*}
		Since $\nu \in \operatorname{Int} N(p,K)$, there exist $\lambda_1,\lambda_2,\lambda_3 >0$ such that
		$\nu = \lambda_1\nu_1 + \lambda_2\nu_2 + \lambda_3\nu_3 $.
		Hence
		\begin{align*}
			K\cap \widetilde{U}(d) = 
			&\left\{ q \in \RR^3: \left\langle q, \nu_{i} \right\rangle \leq 0 , \text{ for all } i \in {1,2,3}, \text{ and } \left\langle q, \nu \right\rangle \leq -d \right\}
			\\
			=&
			\left\{ q \in \RR^3: \left\langle q, \nu_{i} \right\rangle \leq 0 , \text{ for all } i \in {1,2,3}, 
			\right.
			\\
			&
			\left.
			\text{ and } 
			\lambda_1\langle q,\nu_1\rangle
			+
			\lambda_2\langle q,\nu_2\rangle
			+
			\lambda_3\langle q,\nu_3\rangle
			 \leq -d 
			 \right\}.
		\end{align*}
		
		Hence if we take $\delta_1 = \frac{1}{2} \min\left\{ \lambda_1 ,\lambda_2,\lambda_3 \right\} $, then for all $q\in K\cap \widetilde{U}(d)$, and all $t_1,t_2,t_3$ such that $|t_i|<\delta_1$ for all $i\in {1,2,3}$,
		we have
		\begin{align*}
			&\langle q, (\lambda_1+t_1)\nu_1 + (\lambda_2+t_2)\nu_2 + (\lambda_3+t_3)\nu_3 \rangle
			\\
			=
			&
			(\lambda_1+t_1)\langle q,\nu_1\rangle
			+
			(\lambda_2+t_2)\langle q,\nu_2\rangle
			+
			(\lambda_3+t_3)\langle q,\nu_3\rangle
			\\
			\leq&
			(\lambda_1-\delta_1)\langle q,\nu_1\rangle
			+
			(\lambda_2-\delta_1)\langle q,\nu_2\rangle
			+
			(\lambda_3-\delta_1)\langle q,\nu_3\rangle
			\\
			\leq&
			\frac{1}{2} 
			\left( \lambda_1\langle q,\nu_1\rangle
			+
			\lambda_2\langle q,\nu_2\rangle
			+
			\lambda_3\langle q,\nu_3\rangle \right)
			=
			-\frac{1}{2} d
		\end{align*}
		Hence there exists $\delta >0$, depending only on $K$ and $\nu$, such that
		for all $\nu_1\in B_{\SSS^2}(\nu, \delta_1) $ and for all $q\in K\cap \widetilde{U}(d)$, we have $\langle q, \nu_1\rangle \leq-\frac{1}{2} d $, that is,
		$\operatorname{dist}(\operatorname{Plane}(\nu_1,0), K\cap \widetilde{U}(d)) \geq  \frac{1}{2}d$.
	\end{proof}

	Since $\operatorname{Id}_{\star}\nu \in N(p, \operatorname{Id}(\Omega_1))$ is a tangent vector in $\RR^3$, and $\operatorname{Id}(\Omega_{1})$ is a closed unbounded triangular pyramid in $\RR^3$, by Lemma \ref{lemma.sing.pyramid},  there exists $\delta >0$, such that for all $\nu_1 \in B_{\SSS^2}(\operatorname{Id}_{\star}\nu, \delta)$, 
	\begin{align}\label{temp.sing.dist.plane1}
		\operatorname{dist}_{\RR^3}
		(
		\operatorname{Plane}(\nu_1,0), 
		\operatorname{Id}(\Omega_1^{-}(\nu,d))
		)
		 >\frac{ 1}{2} \hat{d},
	\end{align}
	where $\hat{d} = \tanh(d)$ is the Euclidean distance in the image under the Beltrami-Klein identity map $\operatorname{Id}$, since the distance between $p$ and $\Omega_{1}^{-}(\nu,d)$ is $d$ in $\HH^3$. 
	
	Therefore, since $\Omega^{-}(\nu,d) \subset \Omega_1^{-}(\nu,d)$, by \eqref{temp.sing.dist.plane1}, we have
	$$ \operatorname{dist}_{\RR^3}(\operatorname{Plane}(\nu_1,0), \operatorname{Id}(\Omega^{-}(\nu,d))) > c_3 d, $$ for some constant $c_3>0$ depending only on $\Omega$, $p$ and $\nu$.
	Hence for any $R \in (0, \frac{1}{2} c_3 d)$, we have
	$$ \operatorname{dist}_{\RR^3}(\operatorname{Plane}(-\nu_1,R), \operatorname{Id}(\Omega^{-}(\nu,d))) > \frac{1}{2} c_3 d > 0 .$$
	
	Take $c_2:=\frac{1}{2} c_3$, then for all $\nu_1 \in B_{\SSS^2}(\operatorname{Id}_{\star}\nu, \delta)$ and $R \in (0, c_2 d)$, $\operatorname{Plane}(-\nu_1,R)$ intersects $\operatorname{Id}(\Omega)$ but does not intersect $ \operatorname{Id}(\Omega^{-}(\nu,d)) $. Hence 
	$$ \operatorname{Plane} \left( B_{\SSS^2}(-\operatorname{Id}_{\star}\nu,\delta)\times(-c_2 d, 0) \right) \subset \widetilde{\mathcal{L}}, $$
	which proves \eqref{temp.sing.measure.positive}, and thus \eqref{temp.sing.euclidean1} follows.
	  
	Now we will use \eqref{temp.sing.euclidean1} in $\RR^3$ to prove \eqref{temp.sing.hyperbolic1} in $\HH^3$.
	Since for any point $q \in (B_{\RR^3}(1), g^{BK}) = \HH^3$,  
	$\operatorname{dist}_{\RR^3}(\operatorname{Id}(O), \operatorname{Id}(q)) = 
	\tanh\left( \operatorname{dist}_{\HH^3}(O,q)) \right)
	$,
	we have
	\begin{align}\label{sing.mathcal.L}
		\widetilde{\mathcal{L}} = \operatorname{Id}(\mathcal{L}) = 
		\left\{ \operatorname{Plane}(\tau, \tanh(R)): \tau\in \SSS^2, R>0, \text{ such that } \operatorname{Plane}(\tau , R) \in \mathcal{L} .\right\}.
	\end{align}
	By \eqref{invariant.measure.HH3}, we know 
	\begin{align}\label{sing.invariant.measure.H}
		dL_2(\HH^3) = 2 (1+ 2\sinh^2(R)) dR \wedge d\mu_{\SSS^2}.
	\end{align}
	Combining \eqref{sing.mathcal.L} and \eqref{sing.invariant.measure.H}, since $\sinh(R)/R \to 1$ and $\tanh(R)/R \to 1$ as $R\to 0$, if $d$ is small enough, there exist constants $c$ and $c_1$ such that 
	\begin{align*}
		dL_2(\HH^3)(\mathcal{L}) 
		>
		c \cdot dL_2(\RR^3)(\operatorname{Id}(\mathcal{L})) > c_1 d,
	\end{align*}
	hence \eqref{sing.M.difference.hyperbolic} follows, which proves \eqref{temp.sing.hyperbolic1}.
	\end{proof}

	We now combine the two lemmas to finish the proof. By Lemma \ref{lem.diff.area} and Lemma \ref{lem.diff.total.mean.curv}, we choose any $\nu \in \NNN(p,\Omega_{1})\subset \NNN(p,\Omega)$. As $d\to 0^{+}$, there exists positive constants $c$ and $C$, depending only on $\Omega, p, \nu$, such that
	 \begin{align}
		\label{M-infinitesimal}
		&M(\Gamma) - M(\Gamma^{-}(\nu,d)) > c d, \\
		\label{S-infinitesimal}
		&S(\Gamma) - S(\Gamma^{-}(\nu,d)) \leq C d^2.
	\end{align} 
	
	We now apply geodesic normal flow to $\Gamma^{-}(\nu, d)$. From the proof of Proposition \ref{parallel.surface.property}, there exists a constant $T>0$ such that the outer parallel surface $X_T( \Gamma^{-}(\nu, d) ) $ at distance $T$ has the same surface area as $\Gamma$. Denote it by $\Gamma'$. Then $S(\Gamma') = S(\Gamma)$. 
	 
	 By Proposition \ref{parallel.surface.property} we have
	 $$ M(\Gamma')^2 - 16\pi S(\Gamma') - 4S(\Gamma')^2 = 
	 M(\Gamma^{-}(\nu,d))^2 - 16\pi S(\Gamma^{-}(\nu,d)) - 4S(\Gamma^{-}(\nu,d))^2, $$
	 Hence,
	 \begin{align*}
	 	&M(\Gamma')^2 - M(\Gamma)^2
	 \\= & M(\Gamma^{-}(\nu,d))^2 - M(\Gamma)^2 - 16\pi \left[S(\Gamma^{-}(\nu,d)) - S(\Gamma)\right] 
	 \\&- 4\left[S(\Gamma^{-}(\nu,d))^2 - S(\Gamma)^2\right].
	 \end{align*}
	 Substituting \eqref{M-infinitesimal} and \eqref{S-infinitesimal}, we obtain
	 $$ 
	 M(\Gamma')^2 - M(\Gamma)^2 
	 <  - \left[M(\Gamma^{-}(\nu,d))+ M(\Gamma)\right] c d + Cd^2
	 \leq -c_0 d + C d^2
	 $$
	 for constants $c_0, C$ depending only on $\Omega$, $p$ and $\nu$.
	 Therefore, for $d$ small enough, $S(\Gamma') = S(\Gamma)$ but $M(\Gamma') < M(\Gamma)$, so $\Gamma$ is not a minimizer, which contradicts the assumption. Hence if $\Gamma$ is a convex minimizer, then $\Gamma$ has no $0$-singular point.
	\end{proof}
	
	
	\begin{rem}\label{sec:5}
		\textbf{Open Questions.}
	Some further study directions to Santal\'{o}'s problem will be listed here. Clearly, the question of greatest interest is:
	\begin{ques}
		What is the shape of a minimizer to Santal\'{o}'s problem, that is, of a convex surface in $\HH^3$ that minimizes total mean curvature among convex surfaces with fixed area?
	\end{ques}
	
	The explicit form of the minimizer may be difficult to find; however, we may ask the following related questions to understand its geometry:
	\begin{ques}
		Let $\Gamma$ be a minimizer to Santal\'{o}'s problem then 
		\begin{itemize}
			\item Is $\Gamma$ strictly convex?
			\item Is $\Gamma$ regular? (Here, regular means that at every point the supporting plane is unique.)
			\item If $\Gamma$ is regular, is it $C^2$, or even $C^{\infty}$?
			\item Is $\Gamma$ axially symmetric?
		\end{itemize}
	\end{ques}

	By the arguments in Section \ref{sec:3}, one can prove that the minimizing surface $\Gamma$ to Santal\'{o}'s problem with area $S$ is contained in a geodesic ball $B( g(S))$ for some monotonically increasing function $g$ such that $\lim_{S \to 0^+} g(S) = 0$. Hence when $S$ is small enough, $\Gamma$ also lies in a small neighborhood of a point $p$ in $\HH^3$, and, as $S\to 0^+$, its geometric quantities can be arbitrarily close to its Euclidean image in $\RR^3$, under the Beltrami-Klein identity map $\operatorname{Id}$ defined in \eqref{defn.BK.identity} (where we take the origin to be $p$). Thus we may speculate when the area $S$ is small enough, the minimizer is close to a geodesic sphere, which is the minimizer to Minkowski Inequality \eqref{eq:minkowski0} in $\RR^3$.
	
	When $S$ is large enough, some mechanism we do not fully understand yet causes the flat double disks, whose mean curvature is concentrated entirely on the "edge", to have lower total mean curvature than spheres with the same area, and to become candidates for minimizers to Santal\'{o}'s problem. This phenomenon is possibly connected with the exponential growth of geometric quantities in hyperbolic spaces. As $r\to\infty$, for a geodesic sphere in $\HH^3$, its area and total mean curvature both grow at the rate of $e^{2r}$; in contrast, for a geodesic sphere in $\RR^3$, its area grows at the rate of $r^2$, and total mean curvature at the rate of $r$. This difference between hyperbolic and Euclidean geometry makes Santaló's problem more complicated. In view of the family of convex surfaces constructed in Section~\ref{sec:3}, when the area $S$ is large, one may speculate that the minimizer has singular points, is therefore non-smooth, and is close to a flat double disk.
	
	\begin{ques}
		Let $\Gamma(S)$ be a minimizer to Santal\'{o}'s problem with surface area $S$. Then 
		\begin{itemize}
			\item If $S\to 0$, will $\Gamma(S)$, after proper rescaling, converge to a sphere? 
			\item If $S\to \infty$, will $\Gamma(S)$, after proper rescaling,
			converge to a flat double disk?
		\end{itemize}
	\end{ques}
	
	\end{rem}
	
	\section*{Appendix: Computation of $\Phi_{1}(\Omega(r,\alpha), \Gamma_3(r,\alpha))$}
	\label{append.mean.curv.compute}
	In this appendix, we will recall the notion of curvature measures in hyperbolic spaces, and give a detailed computation of $\Phi_1(\Omega(r,\alpha), \Gamma_3(r,\alpha))$ defined in Section \ref{sec:3}.
	
	Let $\KK_0(\mathbb{H}^{n+1})$ be the set of compact convex sets in $\mathbb{H}^{n+1}$ with non-empty interior. For any $K\in \KK_0(\mathbb{H}^{n+1})$, and
	$\epsilon>0$, we define the set
	\begin{eqnarray*}
		K_{\epsilon}  &=&  \{ x\in \mathbb{H}^{n+1}: \operatorname{dist}_{\mathbb{H}^{n+1}}(x,K)\leq \epsilon\}.
	\end{eqnarray*}
	The map $f_{K}: \mathbb{H}^{n+1}\backslash K\rightarrow \partial K$ is defined by
	\begin{eqnarray}\label{defn.metric.proj}
		\operatorname{dist}_{\mathbb{H}^{n+1}}(f_{K}(x),x)=\operatorname{dist}_{\mathbb{H}^{n+1}}(x,K),
	\end{eqnarray}
	and is well-defined because $K$ is convex. We call $f_{K}$ the \emph{metric projection} onto $K$.
	For $\beta\subset \mathbb{H}^{n+1}$, we also define
	\begin{eqnarray*}
		M_{\epsilon}(K,\beta):=f_{K}^{-1}(\beta\cap \partial K)\cap (K_{\epsilon}\backslash K).
	\end{eqnarray*}
	Following Kohlmann \cite{kohlmann1991}, Allendoerfer \cite{allendoerfer1948}, define a Radon measure $\mu_{\epsilon}$ on the Borel $\sigma$-algebra of hyperbolic space
	$\mathcal{B}(\mathbb{H}^{n+1})$ by
	\begin{eqnarray*}
		\mu_{\epsilon}(K,\beta)=\operatorname{Vol}_{\mathbb{H}^{n+1}}(M_{\epsilon}(K,\beta)),
	\end{eqnarray*}
	Set $l_{n+1-r}(t):=\int_{0}^{t}\sinh^{n-r}(x)\cosh^{r}(x)dx, r=0,\cdots, n$. Then the following Steiner-type formula holds. See \cite{allendoerfer1948}, \cite{kohlmann1991}.
	\begin{equation}\label{steiner.hyperbolic} 
		\mu_{\epsilon}(K,\beta)=\sum_{r=0}^{n}l_{n+1-r}(\epsilon)\Phi_{r}(K,\beta),\qquad \forall\beta\in \mathcal{B}(\mathbb{H}^{n+1}).
	\end{equation}
	
	When $\eta=\partial K\cap \beta$ is a $C^{3}$ surface, $\Phi_{r}(K,\cdot)$ has the following simple expression:
	\begin{equation*}\label{curvature.measure.smooth} 
		\Phi_{r}(K,\beta)=\int_{\eta}\sigma_{n-r}^{K}(\kappa)d\mu_{g},
	\end{equation*}
	where $\mu_{g}$ is the surface area measure on $\partial K$ induced by $\operatorname{Vol}_{\mathbb{H}^{n+1}}$. $\sigma_{n-r}^{K}$
	is the $(n-r)$-th elementary symmetric function of the principal curvatures of $\partial K$.
	Generally, $\Phi_{n-r}(K,\cdot)$ is called the $r$-th curvature measure of the convex body $K$. In particular, $\Phi_{0}(K,\cdot)$ is called \emph{Gaussian curvature measure}, and $\Phi_{n-1}(K,\cdot)$ is called \emph{mean curvature measure}.
	For any convex surface $\Gamma$ in $\HH^{n+1}$ with enclosed convex body $\Omega = \operatorname{conv}(\Gamma)$, the total mean curvature of $\Gamma$ is given by $M(\Gamma) = \Phi_{n-1}(\Omega, \HH^{n+1}) = \Phi_{n-1}(\Omega, \Gamma)$. 
	
	For the convex surface $\Gamma(r,\alpha) \subset \HH^3$ defined in Section \ref{sec:3}, we will now use \eqref{steiner.hyperbolic} to compute $\Phi_1(\Omega(r,\alpha), \Gamma_3(r,\alpha))$. Since $\Gamma_3(r,\alpha)$ consists of two identical "circular edges", we define $$\Gamma_3^{+}(r,\alpha) : = \left\{x\in \Gamma_3(r,\alpha): \operatorname{Id}(x) \in B_{\RR^3}(1)\cap\{x_3>0\}\right\},$$
	where  $\operatorname{Id}$ is the Beltrami-Klein identity map defined in \eqref{defn.BK.identity}. Then it suffices to compute $\Phi_1(\Omega(r,\alpha), \Gamma_3^{+}(r,\alpha))$.
	
	To simplify the computation, we will use polar coordinates to model $\HH^3$. That is, we identify $\HH^3$ with $( U, \bar{g} )$ by the coordinate isometry $\operatorname{Pol}: \HH^3 \to ( U, \bar{g} )$, where
	\begin{align*}
		U:=\left\{ (\rho,\theta,\phi): \rho \geq 0, 0\leq \theta \leq \pi , 0 \leq \phi < 2\pi \right\},
	\end{align*} endowed with metric
	\begin{align*}
		\bar{g} = d\rho^2 + \sinh^2(\rho)d\theta^2 + \sinh^2(\rho) \sin^2(\theta) d\phi^2.
	\end{align*}
	$\bar{g}$ is well-defined in the subset of $U$ where $\rho>0$ and $0<\theta < \pi$, but can be naturally extended to all of $U$. Note that we will choose the origin such that the origin in polar coordinates $\operatorname{Pol}^{-1}\{\rho=0\} = A = \operatorname{Id}^{-1}(0,0,\hat{r}\cos(\alpha))$.
	
	Then we can place $\Gamma(r,\alpha)$ such that 
	\begin{align*}
		\Gamma_3^{+}(r,\alpha) = \operatorname{Pol}^{-1}\left\{ (\rho,\theta,\phi)\in U: \rho=r_1, \theta=\frac{\pi}{2} \right\},
	\end{align*}
	and
	\begin{align*}
		&M_{\epsilon}(\Omega(r,\alpha), \Gamma_3^{+}(r,\alpha))\\ = 
		\operatorname{Pol}^{-1}\{
		& (\rho,\theta,\phi)\in U: \text{There exists } 0\leq \gamma \leq \frac{\pi}{2}-\beta, 0<t < \epsilon, \text{ such that } 
		\\
		&
		\frac{\sinh(t)}{\sin(\frac{\pi}{2}-\theta)}  = \frac{\sinh(\rho)}{\sin(\frac{\pi}{2}+\gamma)},
		\text{ and } 
		\\
		&
		\cosh(\rho)  = \cosh(r_1) \cosh(t) - \sinh(r_1)\sinh(t) \cos(\frac{\pi}{2}+\gamma)
		 \}
	\end{align*}
	by the hyperbolic law of cosines and sines.
	Set $$N_{\epsilon}(r,\alpha) := \left\{x\in M_{\epsilon}(\Omega(r,\alpha), \Gamma_3(r,\alpha)): \operatorname{dist}_{\HH^3}(x,\Gamma_3^{+}(r,\alpha)) = \epsilon \right\}.$$
	Then for any $\epsilon>0$, $N_{\epsilon}(r,\alpha)$ is a $2$-dimensional submanifold of $\HH^3$ and we have 
	\begin{align*}
		\mu_{\epsilon}( \Omega(r,\alpha), \Gamma_3^{+}(r,\alpha)  ) = \operatorname{Vol}_{\HH^3}M_{\epsilon}(\Omega(r,\alpha), \Gamma_3^{+}(r,\alpha)) = 
	\int_{0}^{\epsilon}S(N_{t}(r,\alpha))dt .
	\end{align*}
	Clearly, for any $t>0$, $N_{t}(r,\alpha)$ is also given by
	\begin{align}\label{append.2.tmp.1}
		N_{t}(r,\alpha) = 
		\operatorname{Pol}^{-1}\{
		& (\rho,\theta,\phi)\in U: \text{There exists } 0\leq \gamma \leq \frac{\pi}{2}-\beta, \text{ such that } 
		\\
		\nonumber
		&
		\frac{\sinh(t)}{\sin(\frac{\pi}{2}-\theta)}  = \frac{\sinh(\rho)}{\sin(\frac{\pi}{2}+\gamma)},
		\text{ and } 
		\\
		\nonumber
		&
		\cosh(\rho)  = \cosh(r_1) \cosh(t) - \sinh(r_1)\sinh(t) \cos(\frac{\pi}{2}+\gamma)
		\}
	\end{align}
	We may express $\rho$ and $\theta$ implicitly as functions of $\gamma$, and use them to parametrize $N_{t}(r,\alpha)$, namely
	\begin{align}\label{append.2.tmp.2}
		N_{t}(r,\alpha) = \operatorname{Pol}^{-1}\{
		& (\rho_t(\gamma),\theta_t(\gamma),\phi):  0\leq \gamma \leq \frac{\pi}{2}-\beta, 
		0\leq \phi< 2\pi
		\},
	\end{align}
	where the functions $\rho = \rho_t(\gamma)$ and $\theta = \theta_t(\gamma)$ are determined by the defining conditions in \eqref{append.2.tmp.1}.
	By \eqref{append.2.tmp.2}, the area element of $N_{t}(r,\alpha)$ is given by
	\begin{align}\label{append.2.temp.3}
		d\mu_{N_{t}(r,\alpha)}  =  \sinh(\rho_t(\gamma)) \sin(\theta_t(\gamma)) \sqrt{ (\rho_t'(\gamma))^2 + \sinh^2(\rho_t(\gamma)) (\theta_t'(\gamma))^2 }d\gamma d\phi.
	\end{align}
	Differentiating the two equations in \eqref{append.2.tmp.1}, we have
	\begin{align}\label{append.2.tmp.4}
		\sinh(\rho_t(\gamma))\rho_t'(\gamma) =  \sinh(r_1)\sinh(t) \cos(\gamma)
	\end{align}
	and
	\begin{align}\label{append.2.tmp.5}
		\cosh(\rho_t(\gamma)) \rho_t'(\gamma) \cos(\theta_t(\gamma)) 
		-
		\sinh(\rho_t(\gamma)) \sin(\theta_t(\gamma)) \theta_t'(\gamma)
		=
		\sinh(t) \sin(\gamma).
	\end{align}
	Substituting \eqref{append.2.tmp.5} and \eqref{append.2.tmp.4} into \eqref{append.2.temp.3} and using \eqref{append.2.tmp.1}, we have
	\begin{align*}
		d\mu_{N_{t}(r,\alpha)}  =  \sinh t
		\left(
		\sinh r_1\cosh t
		+
		\cosh r_1\sinh t\sin\gamma\right)
		d\gamma d\phi
	\end{align*}
	and hence
	\begin{align*}\label{append.2.eq.S.N_t}
		&S(N_{t}(r,\alpha))
		\\
		=&
		\int_{0}^{\frac{\pi}{2}-\beta} d\gamma \int_{0}^{2\pi} d\phi 
		\sinh t\left(
		\sinh r_1\cosh t
		+
		\cosh r_1\sinh t\sin\gamma\right)
		\\
		\nonumber
		=
		&2\pi  \sinh(r_1) \sinh(t) \cosh(t) (\frac{\pi}{2} - \beta)
		+
		2\pi \cosh(r_1) \sinh^2(t) (1-\sin(\beta)).
	\end{align*}
	Hence
	\begin{align*}
		&M_{\epsilon}(\Omega(r,\alpha), \Gamma_3(r,\alpha)) \\
		= 
		&2 M_{\epsilon}(\Omega(r,\alpha), \Gamma_3^{+}(r,\alpha))\\
		=
		&4 \pi  \sinh(r_1) (\frac{\pi}{2} - \beta) \int_{0}^{\epsilon} \sinh(t) \cosh(t) dt \\
		&
		+
		4\pi \cosh(r_1)  (1-\sin(\beta)) \int_{0}^{\epsilon} \sinh(t)^2 dt .
	\end{align*}
	Comparing with \eqref{steiner.hyperbolic}, we obtain $\Phi_{1}(\Omega(r,\alpha), \Gamma_3(r,\alpha)) =4 \pi  \sinh(r_1) (\frac{\pi}{2} - \beta)  $.
	
	\addtocontents{toc}{\protect\setcounter{tocdepth}{0}}
	\section*{Acknowledgments}
	I would like to thank my supervisor Pengfei Guan for introducing this problem to me and for many inspiring discussions, and Xiaodong Yang for useful comments on the numerical computation.
	
	\addtocontents{toc}{\protect\setcounter{tocdepth}{1}}
	\bibliographystyle{amsplain}

	\end{document}